\documentclass[11pt,a4paper]{amsart}

\usepackage[T1]{fontenc}
\usepackage[utf8]{inputenc}
\usepackage{amsmath,amssymb,amsthm}
\usepackage{mathrsfs}
\usepackage{tikz-cd}
\usepackage[all]{xy}
\newenvironment{DIFnomarkup}{}{}
\usepackage[colorlinks=true,linkcolor=blue,citecolor=blue,urlcolor=blue]{hyperref}

\newtheorem{theorem}{Theorem}[section]
\newtheorem{proposition}[theorem]{Proposition}
\newtheorem{lemma}[theorem]{Lemma}

\theoremstyle{definition}
\newtheorem{definition}[theorem]{Definition}
\theoremstyle{remark}
\newtheorem{remark}[theorem]{Remark}

\newcommand{\C}{\mathcal{C}}
\newcommand{\Nil}{\mathsf{Nil}}
\newcommand{\Grp}{\mathsf{Grp}}
\newcommand{\Ab}{\mathsf{Ab}}

\newcommand{\Ker}{\operatorname{Ker}}
\newcommand{\Eq}{\operatorname{Eq}}
\newcommand{\Pt}{\mathsf{Pt}}

\title[Central extensions relative to $n$-nilpotent objects]{A characterization of central extensions relative to n-nilpotent objects}
\author{Marino Gran}
\address{Institut de Recherche en Math\'ematique et Physique,
Universit\'e catholique de Louvain,
Chemin du Cyclotron 2, 1348 Louvain-la-Neuve, Belgium}
\email{marino.gran@uclouvain.be}

\author{James R.\ A.\ Gray}
\address{Mathematics Division, Department of Mathematical Sciences,
Stellenbosch University, Private Bag X1, Matieland 7602, South Africa}
\email{jamesgray@sun.ac.za}

\date{\today}
\thanks{The first author's research was supported by the Fonds de la Recherche
Scientifique -- FNRS under Grant CDR No.~J.0092.26.
The first author made use of Claude Fable 5 (Anthropic) in the
preparation of this paper: the AI assistant was useful to improve the overall
presentation and exposition. The ideas and the proofs in the paper are due to the authors. The authors verified all AI-assisted material
and take full responsibility for the final text.
}
\subjclass[2020]{18E13, 18G50, 20F18, 18D40. 18A40 }
\keywords{Nilpotency, central extension, semi-abelian category, algebraically coherent category, Huq commutator, centrality, Three Subobjects Lemma. }
\RequirePackage{xcolor} 

\colorlet{RED}{red} 
\providecommand{\DIFdeltex}[1]{} 
\RequirePackage{listings} 
\lstdefinelanguage{DIFcode}{ 
  moredelim=[il][\color{white}\tiny]{\%DIF\ <\ }, 
  moredelim=[il][\sffamily\bfseries]{\%DIF\ >\ } 
} 
\lstdefinestyle{DIFverbatimstyle}{ 
	language=DIFcode, 
	basicstyle=\ttfamily, 
	columns=fullflexible, 
	keepspaces=true 
} 
\lstnewenvironment{DIFverbatim}[1][]{\lstset{style=DIFverbatimstyle,#1}}{} 
\lstnewenvironment{DIFverbatim*}[1][]{\lstset{style=DIFverbatimstyle,showspaces=true,#1}}{} 
\begin{document}

\begin{abstract}
We characterize the central extensions, in the sense of the categorical
theory of central extensions of Janelidze and Kelly, relative to the
Birkhoff subcategory of $n$-nilpotent objects of a
semi-abelian and algebraically coherent category. An extension $f \colon A \to B$ with kernel $K$ turns
out to be central precisely when the iterated Huq commutator
$[A,[A,\dots,[A,K]\dots]]$, with $n$ occurrences of $A$, vanishes. For $n=2$ this condition reads $[A,[K,A]] = 0$,
that is, $[K,A]$ is a central subobject of
$A$. In the special case of groups this description was obtained by Everaert and
the first author. The present proof is categorical, and applies in particular
to Lie algebras, to crossed modules, to cocommutative Hopf algebras over
a field, and to any category of interest.
\end{abstract}

\maketitle

\section{Introduction}
A special case of categorical Galois theory \cite{Jan90}, which has found many interesting applications in algebra, is
the categorical theory of central extensions of Janelidze and
Kelly~\cite{JK94,JK00}. Most often, the starting point is a Birkhoff subcategory $\mathcal{X}$ of an exact Mal'tsev category $\C$ (a full reflective subcategory closed in $\C$ under subobjects and regular quotients), and the fundamental theorem of Galois theory
provides a complete classification of the so-called
$\mathcal{X}$-central extensions, defined categorically with respect to
 the reflector $I \colon \C \to \mathcal{X}$. When $\mathcal{X}$
is the subcategory $\Ab(\C)$ of abelian objects of a semi-abelian
category $\C$ (in the sense of Janelidze, M\'arki and Tholen \cite{JMT}), one recovers the classical notion: an extension
$f \colon A \to B$ with kernel $K$ is central exactly when $[K,A] = 0$
for the Huq commutator~\cite{JK00, Gran-Applications}. Determining, for other Birkhoff
subcategories, an explicit commutator condition equivalent to relative
centrality is a delicate problem, in general.

For the varieties $\Nil_n$ of $n$-nilpotent groups and $\mathsf{Sol}_n$
of $n$-solvable groups this problem was solved by Everaert and the first
author in~\cite{EG}, where the relative commutator of Everaert~\cite{E}
associated with these subvarieties was computed explicitly, and the
corresponding relative central extensions were described. The arguments
of~\cite{EG} are, however, specific to groups: they rely on the
description of the relative commutator by means of generators, hence on the
possibility of computing with elements and with iterated group-theoretic
commutator words. In particular, they do not extend to Lie algebras, cocommutative Hopf algebras,
crossed modules, or to the other algebraic structures to which the
semi-abelian approach applies, and no proof valid beyond the group case
appears to have been available in the literature.

The purpose of this paper is to give such a proof for the Birkhoff subcategory of $n$-nilpotent objects, for any $n \geq 1$. We first treat in detail the case $n = 2$, which already displays all the ideas of the general argument.
Working in a semi-abelian category which is algebraically coherent in the
sense of Cigoli, Gray and Van der Linden~\cite{CGVdL}, we show that an
extension $f \colon A \to B$ is
central relative to the Birkhoff subcategory $\Nil_2(\C)$ of
$2$-nilpotent objects precisely when its kernel $K$ satisfies
\[
  [A,[K,A]] \;=\; 0 ,
\]
the commutators being those of Huq (Theorem~\ref{thm:main}).
Two ingredients play a key role  in the proof of this result. The first
is the \emph{Three Subobjects Lemma} (Lemma~\ref{lem:three}), which
holds in any algebraically coherent semi-abelian category~\cite{CGVdL}
and yields the inequality $[K,[A,A]] \leq [A,[K,A]]$. The second is  the
\emph{Smith is Huq} condition \cite{BG02, MFVdL}, also available under algebraic coherence,
which allows one to translate the resulting vanishing of the Huq
commutator $[K,[A,A]]$ into the existence of a centralizing double
equivalence relation $C$ on the kernel pair $\Eq(f)$ of $f$ and the
Smith--Pedicchio commutator $[\nabla_A, \nabla_A]_{\mathsf{Smith}}$ of the indiscrete
equivalence relation on $A$ with itself~\cite{Smith,Ped}, a congruence
on $A$  whose normalization is the derived subobject $[A,A]$ of $A$.
The quotient of $\Eq(f)$ by this double equivalence relation $C$ turns out to be a $2$-nilpotent object, and this is what eventually makes the first projection $\Eq(f) \to A$ a trivial extension.

In the last section we then extend the result to the general case of $n$-nilpotent objects (for any $n\ge 1$),
where we show that
 $f \colon A \to B$ is
central relative to the Birkhoff subcategory $\Nil_n(\C)$ of
$n$-nilpotent objects precisely when its kernel $K$ satisfies
\[
  \underbrace{[A,\dots,[A,K]\dots]}_{\text{$n+1$ terms}}\;=\; 0
\]
(Theorem~\ref{thm:n-main}). For $n = 1$ one recovers the classical characterization of $\Ab(\C)$-central extensions, and for $n=2$ Theorem~\ref{thm:main}. The additional ingredient needed for general $n$ is a purely commutator-theoretic result (Proposition~\ref{prop:n-nil}): if $A = K \vee B$ with $K$ normal in $A$, $B$ an $(n-1)$-nilpotent subobject, and the $n$-fold iterated commutator of $A$ with $K$ trivial, then $A$ is $n$-nilpotent.

Our results apply to a wide range of categories. These include all categories
of interest in the sense of Orzech \cite{Orzech}, which are semi-abelian \cite{JMT}
and algebraically coherent \cite{CGVdL}. Examples of categories of interest include the
categories of groups, Lie algebras, associative algebras, and Poisson algebras.
The category of cocommutative Hopf algebras over a field is also semi-abelian \cite{GSV} and
algebraically coherent \cite{GG21}. Further derived examples include
the category of crossed modules in $\C$ (equivalently, of internal groupoids in $\C$),
as well as functor categories with values in $\C$,
where $\C$ is any semi-abelian algebraically coherent category \cite{CGVdL}. For an explicit
formula for the commutator in $\Omega$-groups, which include all categories of
interest, see Theorem 4B of \cite{Hig56}. This description is refined for
categories of interest in the proof of Theorem 5.3.6 of \cite{Cig09}
(which later appeared as Theorem 4.4 of \cite{CGVdL}). For cocommutative
Hopf algebras see e.g. Section $4$ in  \cite{GSV}.

\section{Preliminaries}

\subsection{Semi-abelian and algebraically coherent categories}

Recall that a category is \emph{regular} when it is finitely complete,
has coequalizers of kernel pairs, and regular epimorphisms are stable
under pullback; it is \emph{Barr-exact} when, moreover, every
equivalence relation is effective, i.e.\ is the kernel pair of its own
coequalizer. A finitely complete category is a \emph{Mal'tsev} category
when every reflexive relation in it is an equivalence relation;
equivalently, in the regular case, when equivalence relations on a common
object permute~\cite{CLP}. A pointed finitely complete category is
\emph{protomodular} in the sense of Bourn~\cite{Bourn91} when the Split Short Five Lemma
holds in it; protomodular categories are always Mal'tsev categories.

\begin{definition}[\cite{JMT}]
A category $\C$ is \emph{semi-abelian} when it is pointed, Barr-exact,
protomodular, and has binary coproducts.
\end{definition}

The varieties of groups, of non-unital rings, of Lie algebras over a
commutative ring, of associative and of Leibniz algebras, of crossed
modules, and more generally every category of interest in the sense of
Orzech \cite{Orzech}, are semi-abelian; so are the category of cocommutative Hopf
algebras over a field \cite{GSV} and the dual of the category of pointed sets \cite{JMT}. In a semi-abelian category one has a
well-behaved notion of exact sequence and a remarkable theory of
commutators which will be of importance for the present paper (see 
Section~\ref{ssec:huq}).

The second standing hypothesis is a condition on the behaviour of
 \emph{binary joins} under change-of-base in the fibration of points. For an object
$Y$ of $\C$, let $\Pt_Y(\C)$ denote the category of points over
$Y$, that is, of split epimorphisms with codomain $Y$ together with a
chosen splitting; any morphism $f \colon X \to Y$ induces a
change-of-base functor $f^{*} \colon \Pt_Y(\C) \to
\Pt_X(\C)$ given by pulling back along $f$.

\begin{definition}[\cite{CGVdL}]
A finitely complete category $\C$ is \emph{algebraically coherent} when,
for every morphism $f$ in $\C$, the change-of-base functor $f^{*}$ is
\emph{coherent}, that is, preserves finite limits and jointly strongly
epimorphic pairs of morphisms.
\end{definition}

Algebraic coherence is a rather mild condition in practice: all the
semi-abelian categories listed above are algebraically coherent. A
notable example of a semi-abelian variety which is \emph{not}
algebraically coherent is the variety of non-associative rings. What
 algebraic coherence guarantees
is a very good behaviour of commutators: in an algebraically
coherent semi-abelian category the Huq commutator distributes over joins of
arbitrary subobjects, in the sense that
\[
  [X \vee Y, Z] \;=\; [X,Z] \vee [Y,Z]
\]
for subobjects $X$, $Y$, $Z$ of a common object  
$A$, and the Three
Subobjects Lemma recalled in the next subsection holds \cite{CGVdL}.
The fact that algebraic coherence implies distributivity of the Huq commutator
seems to be known, but we couldn't find an explicit reference and have therefore 
included a proof, see Proposition \ref{alg_coh=>Huq_dist} below.
Neither of these properties is available in
an arbitrary semi-abelian category.

\subsection{Huq centrality and the Huq commutator}\label{ssec:huq}

Let $\C$ be a pointed category with finite limits.

\begin{definition}[\cite{Huq}]
Two morphisms $x \colon X \to A$ and $y \colon Y \to A$ are said to
\emph{cooperate} (or to \emph{commute in the sense of Huq}) when there exists a morphism
$\varphi \colon X \times Y \to A$, called a \emph{cooperator} for $x$ and
$y$, such that
\[
  \varphi \circ \langle 1_X, 0 \rangle = x
  \qquad\text{and}\qquad
  \varphi \circ \langle 0, 1_Y \rangle = y .
\]
\end{definition}

Thus $x$ and $y$ cooperate exactly when the outer part of the diagram
\[
\begin{DIFnomarkup}\begin{tikzcd}[ampersand replacement=\&, column sep=large, row sep=large]
X \arrow[r, "{\langle 1_X,\, 0 \rangle}"] \arrow[rd, "{x}"']
\& X \times Y \arrow[d, dashed, "{\varphi}"]
\& Y \arrow[l, "{\langle 0,\, 1_Y \rangle}"'] \arrow[ld, "{y}"] \\
\& A \&
\end{tikzcd}\end{DIFnomarkup}
\]
can be filled in by a (dashed) morphism $\varphi$ making both triangles
commute. In a semi-abelian category (more generally, in any unital category \cite{BB}) such a $\varphi$ is unique when it exists, so that cooperating
is a property of the pair $(x,y)$ and not extra structure. For groups the existence of a cooperator says precisely that every element of the image
of $x$ commutes with every element of the image of $y$.

\begin{definition}
Let $x \colon X \to A$ and $y \colon Y \to A$ be subobjects of an object
 $A$ of a semi-abelian category. The
\emph{Huq commutator} $[X,Y]$ is the smallest normal subobject $N$ of
$A$ with the property that the composites $q \circ x$ and $q \circ y$
cooperate, where $q \colon A \to A/N$ denotes the quotient of $A$ by $N$.
We say that $X$ and $Y$ commute when $[X,Y] = 0$.
\end{definition}

We shall use freely the following standard facts, valid in any
semi-abelian category. The Huq commutator is symmetric and monotone in
each variable. If $X$ and $Y$ are normal subobjects of $A$, then
 \[
  [X,Y] \;\leq\; X \wedge Y. \]
Finally, the Huq commutator is
 preserved by regular images, in the sense that
\[
  g\bigl([X,Y]\bigr) \;=\; \bigl[g(X),g(Y)\bigr]
\]
for every regular epimorphism $g$ with domain $A$, where $g(-)$ denotes
the direct image along $g$. For proofs of these properties we refer to \cite{Huq} and to
the monograph~\cite[Chapter~4]{BB}.

An object $A$ is \emph{abelian} exactly when $[A,A] = 0$, that is,
when $1_A$ cooperates with itself; and a normal subobject $K$ of $A$ is
\emph{central} exactly when $[K,A] = 0$.
Note that if a subobject $x \colon X \to A$ is abelian as an object, then $[X,X] = 0$ in $A$ as well.

By an \emph{extension} we mean a regular epimorphism $f \colon A \to B$
in $\C$, and we write $K = \Ker(f)$ for its kernel, viewed as a normal
subobject of $A$. The classical condition
characterizing the extensions which are central relative to the Birkhoff subcategory $\Ab(\C)$ (in the sense recalled in Section~\ref{sec:galois} below) is precisely that $[K, A]=0$ \cite{Gran-Applications}.

Iterated Huq commutators are governed by the following categorical
version of the classical Three Subgroups Lemma of P.~Hall~\cite{Hall34},
which holds in the algebraically coherent setting~\cite{CGVdL}.

\begin{lemma}[Three Subobjects Lemma]\label{lem:three}
Let $\C$ be an algebraically coherent semi-abelian category, and let
$X$, $Y$ and $Z$ be normal subobjects of an object $A$ of $\C$. Then
\[
  [[X,Y],Z] \;\leq\; [[Y,Z],X] \vee [[Z,X],Y] .
\]
\end{lemma}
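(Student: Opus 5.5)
We would prove Lemma~\ref{lem:three} by reducing it to a statement about cooperating morphisms in a quotient of $A$, and then using algebraic coherence to construct the required cooperator. Put $N = [[Y,Z],X] \vee [[Z,X],Y]$. This is a normal subobject of $A$: the subobjects $X,Y,Z$ are normal, so the commutators involved are normal, and joins of normal subobjects are normal in a semi-abelian category. Let $q\colon A \to A/N$ be the quotient. Since the Huq commutator is preserved by regular images, $q([[X,Y],Z]) = [[q(X),q(Y)],q(Z)]$, and similarly for the other two terms. Also $[[X,Y],Z] \le N$ holds exactly when its image under $q$ vanishes. Replacing $A$ by $A/N$ and $X,Y,Z$ by their images, which remain normal, we are reduced to the following special case: if $[[Y,Z],X]=0$ and $[[Z,X],Y]=0$, then $[[X,Y],Z]=0$.

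For the special case we would use the characterization of algebraic coherence from \cite{CGVdL} in terms of normal subobjects. Namely, for normal $X,Y$ and any $Z$, the commutator $[X,Y]$ is the join of suitable images of $Z$-actions, or, more concretely, $[X\vee\ldots]$-type distributivity holds at the level of the "relative" commutators $[X,Y]_Z$ computed in points over $Z$. We would proceed as follows.
\begin{enumerate}
\item Consider the split extension $A \rtimes$-type point obtained from the conjugation action of $A$ on itself, pulled back along $Z \to A$.
\item Translate "$Z$ centralizes $[X,Y]$" into the statement that the induced $Z$-action on $[X,Y]$ is trivial.
\item Use that the change-of-base functor along $Z\to A$ preserves the jointly strongly epimorphic pair defining $[X,Y]$ as the image of the cooperator-obstruction object, namely the kernel of $X+Y\to X\times Y$ mapped into $A$.
\end{enumerate}
By the hypotheses, $Z$ acts trivially "modulo" each of $X$ and $Y$ on the relevant pieces. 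Coherence then lets us conclude that the action on the image of $[X,Y]$ is trivial, because a trivial action on the generating parts propagates to the strongly epimorphic image.

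The main obstacle is step~(3): making precise how vanishing of $[[Y,Z],X]$ and $[[Z,X],Y]$ yields triviality of the $Z$-action on the "formal commutator object" $X\diamond Y$ (the kernel of $X+Y\to X\times Y$). In groups this is the Hall--Witt identity, which is unavailable here. So rather than constructing the argument from scratch, we would invoke the result as proved in \cite{CGVdL} (Theorem~7.1 there, the Three Subobjects Lemma for algebraically coherent semi-abelian categories), where exactly this use of coherence of $f^*$ in the fibration of points is carried out. The only work needed on our side is the reduction to the quotient $A/N$ described above, together with checking that normality is preserved under regular images, which holds in any semi-abelian category.
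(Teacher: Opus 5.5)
Your proposal is correct and ends up doing what the paper does: the paper gives no proof of Lemma~\ref{lem:three} and simply cites \cite{CGVdL}, and your reduction to the quotient $A/N$ is a valid but inessential preliminary to that same citation (it uses that joins and regular images of normal subobjects are normal, and that Huq commutators are preserved by regular epimorphisms). The heuristic steps (1)--(3) are not an argument; for instance, identifying the image of $X \diamond Y$ with $[X,Y]$ already requires normality of Higgins commutators, itself a consequence of algebraic coherence. You should drop them and let the citation carry the proof, as the paper does.
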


This is a highly non-trivial statement in the categorical context: it
is one of the properties of the Huq commutator which fail in an arbitrary
semi-abelian category, and whose validity under algebraic coherence
justifies our standing hypotheses. For instance, the Three Subobjects Lemma was recently shown to fail in the semi-abelian categories of skew braces and of Hopf braces \cite{SV}. Note that the three commutators appearing in the statement are normal subobjects of $A$, since Huq commutators are normal by definition.

We record the instance that will be used below. Let $f \colon A \to B$ be
an extension with kernel $K$, and apply Lemma~\ref{lem:three} with
$X = Y = A$ and $Z = K$, which is legitimate since $A$ and $K$ are both
normal subobjects of $A$. This gives
\[
  [[A,A],K] \;\leq\; [[A,K],A] \vee [[K,A],A] .
\]
Since the Huq commutator is symmetric, both terms on the right hand side
 are equal to $[A,[K,A]]$, and one obtains
\begin{equation}\label{eq:three-instance}
  [K,[A,A]] \;=\; [[A,A],K] \;\leq\; [A,[K,A]] .
\end{equation}
In particular the centrality condition $[A,[K,A]] = 0$ of
Theorem~\ref{thm:main} implies $[K,[A,A]] = 0$.

\begin{remark}\label{rem:strict}
The  inequality~\eqref{eq:three-instance} may be strict, even in the
category $\Grp$ of groups: for the extension $f \colon A = S_3 \to C_2$ given by the
sign, with kernel $K = C_3 = [S_3,S_3]$, one has $[K,[A,A]] = [C_3,C_3] = 1$ while
$[A,[K,A]] = [S_3,C_3] = C_3$. By Theorem~\ref{thm:main} below, this extension is therefore not central relative to $\Nil_2(\Grp)$, although $[K,[A,A]]=1$. \end{remark}

Recall that in a semi-abelian category the join of subobjects $m:S\to A$ and $n:T\to A$
can be constructed as \emph{mono part} of the (regular epi,mono)-decomposition of the induced
morphism $[m,n]:S+T\to A$, where $S+T$ is the coproduct of $S$ and $T$.
Note that by Theorem 6.27 of \cite{CGVdL} algebraic coherence implies that the change-of-base functors
of the fibration of points along split epimorphisms preserve finite colimits.
\begin{proposition}\label{alg_coh=>Huq_dist}
Let $\C$ be a semi-abelian category.
If $\C$ is algebraically coherent (or, more generally, if the change-of-base functors of the fibration of points along the split epimorphisms of the form $B\to 0$
preserve binary coproducts), then Huq commutators distribute over joins: for any subobjects $X,Y$ and $Z$ of an object $A$ in $\C$,
\[[X\vee Y, Z]  = [X,Z]\vee[Y,Z].\]
\end{proposition}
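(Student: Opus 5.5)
The inequality $[X,Z]\vee[Y,Z]\leq[X\vee Y,Z]$ is immediate from monotonicity of the Huq commutator, so the whole task is the reverse inequality. Put $N=[X,Z]\vee[Y,Z]$; it is normal in $A$, being a join of normal subobjects in a semi-abelian category. Let $q\colon A\to A/N$ be the quotient. Since Huq commutators are preserved by regular images, $[q(X),q(Z)]=q([X,Z])=0$ and likewise $[q(Y),q(Z)]=0$. Moreover $q(X\vee Y)=q(X)\vee q(Y)$, because direct images along regular epimorphisms preserve joins. Hence
\[
q\bigl([X\vee Y,Z]\bigr)=[q(X)\vee q(Y),q(Z)].
\]
If this vanishes, then $[X\vee Y,Z]\leq N$, which is the required inequality. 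This reduces the statement to the following special case: \emph{if $X$ and $Y$ both commute with $Z$ inside $A$, then $X\vee Y$ commutes with $Z$.}

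To prove the special case, use the description of the join recalled above: $X\vee Y$ is the image of $[x,y]\colon X+Y\to A$. Let $\varphi\colon X\times Z\to A$ and $\psi\colon Y\times Z\to A$ be the cooperators. It suffices to build a cooperator $\theta\colon (X+Y)\times Z\to A$ for $[x,y]$ and $z$. Indeed, $[x,y]\times 1_Z$ is a regular epimorphism onto $(X\vee Y)\times Z$, and by uniqueness of cooperators in a unital category $\theta$ will then factor through it. The factorisation works because $\theta$ agrees with the cooperator of $X\vee Y$ and $Z$ on the kernel of $[x,y]\times 1_Z$; alternatively one uses the standard fact that cooperation of $e\circ m$ and $z$ with $e$ regular epi implies cooperation of $m$ and $z$. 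This last reduction is routine.

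The key step is constructing $\theta$, and here the hypothesis on the change-of-base functors enters. Consider the points $(\pi_Z\colon X\times Z\to Z,\ \langle 0,1\rangle)$ and $(\pi_Z\colon Y\times Z\to Z,\ \langle 0,1\rangle)$ over $Z$. These are the images under the change-of-base functor along $Z\to 0$ of the points $X\rightleftarrows 0$ and $Y\rightleftarrows 0$. Since that functor preserves binary coproducts, their coproduct in $\Pt_Z(\C)$ is $(X+Y)\times Z$. The coproduct in $\Pt_Z(\C)$ is the pushout of $X\times Z\leftarrow Z\rightarrow Y\times Z$, so $(X+Y)\times Z\cong (X\times Z)+_Z(Y\times Z)$. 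The maps $\varphi$ and $\psi$ agree on $Z$, since both restrict to $z$ along $\langle 0,1\rangle$, so they induce $\theta\colon (X+Y)\times Z\to A$. It remains to check that $\theta\langle 1,0\rangle=[x,y]$ and $\theta\langle 0,1\rangle=z$, which follows by precomposing with the coprojections.

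The main obstacle is the identification of the coproduct in $\Pt_Z(\C)$ with $(X+Y)\times Z$, including checking that the comparison isomorphism is compatible with the inclusions $\langle 1,0\rangle$ and $\langle 0,1\rangle$. In the algebraically coherent case this preservation follows from Theorem~6.27 of \cite{CGVdL}, as noted before the statement.
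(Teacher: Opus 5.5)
Your proposal is correct and follows essentially the paper's proof: you use the same reduction to the case $[X,Z]=[Y,Z]=0$ by passing to $A/N$ with $N=[X,Z]\vee[Y,Z]$ (the paper does this last), the same cooperator $\theta\colon (X+Y)\times Z\to A$ induced from the coproduct of $X\times Z$ and $Y\times Z$ in $\Pt_Z(\C)$ (the paper phrases the induced map as a morphism of points, which amounts to the same thing), and the same descent along $e\times 1_Z$, where $[x,y]=m\circ e$ is the image factorisation. Your first justification of that descent is circular, since it presupposes a cooperator for $X\vee Y$ and $Z$; the valid reason is your second one (Proposition~1.6.4 of \cite{BB}): $\theta$ coequalizes the kernel pair $\Eq(e)\times Z$ of $e\times 1_Z$, as one checks on the jointly epimorphic pair $\langle 1,0\rangle\colon \Eq(e)\to\Eq(e)\times Z$ and $\langle 0,1\rangle\colon Z\to \Eq(e)\times Z$.
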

\begin{proof}
Suppose $f:A\to C$ and $f':A'\to C$ are morphisms in $\C$ which both Huq-commute with
a morphism $g:B\to C$ in $\C$.
We obtain the diagram of morphisms of split extensions
\[
\xymatrix@!@C=-7ex@R=-7ex{
&
&
A
\ar[rrr]^{\langle 1,0\rangle}
\ar[ld]_{\iota_1}
\ar[ddd]_{f}
&
&
&
A\times B
\ar@<0.5ex>[rrr]^{\pi_2}
\ar[ld]_{\iota_1\times 1}
\ar[ddd]^{\langle \varphi, \pi_2\rangle}
&
&
&
B
\ar@<0.5ex>[lll]^{\langle 0,1\rangle}
\ar@{=}[ld]
\ar@{=}[ddd]
\\
&
A+A'
\ar[rrr]^{\langle 1,0\rangle}
\ar[ddd]_{[f,f']}
&
&
&
(A+A')\times B
\ar@<0.5ex>[rrr]^{\pi_2}
\ar@{-->}[ddd]^{\theta}
&
&
&
B
\ar@<0.5ex>[lll]^{\langle 0,1\rangle}
\ar@{=}[ddd]
\\
A'
\ar[rrr]^{\langle 1,0\rangle}
\ar[ddd]_{f'}
\ar[ru]^{\iota_2}
&
&
&
A'\times B
\ar@<0.5ex>[rrr]^{\pi_2}
\ar[ddd]^{\langle \varphi', \pi_2\rangle}
\ar[ru]^{\iota_2\times 1}
&
&
&
B
\ar@<0.5ex>[lll]^{\langle 0,1\rangle}
\ar@{=}[ddd]
\ar@{=}[ru]
\\
&
&
C
\ar[rrr]^{\langle 1,0\rangle}
\ar@{=}[ld]
&
&
&
C\times B
\ar@<0.5ex>[rrr]^{\pi_2}
\ar@{=}[ld]
&
&
&
B
\ar@<0.5ex>[lll]^{\langle g,1\rangle}
\ar@{=}[ld]
\\
&
C
\ar[rrr]^{\langle 1,0\rangle}
&
&
&
C\times B
\ar@<0.5ex>[rrr]^{\pi_2}
&
&
&
B
\ar@<0.5ex>[lll]^{\langle g,1\rangle}
\\
C
\ar[rrr]^{\langle 1,0\rangle}
\ar@{=}[ru]
&
&
&
C\times B
\ar@<0.5ex>[rrr]^{\pi_2}
\ar@{=}[ru]
&
&
&
B
\ar@<0.5ex>[lll]^{\langle g,1\rangle}
\ar@{=}[ru]
}
\]
where $\varphi$ and $\varphi'$ are the respective cooperators showing that $f$ and $g$, and $f'$ and $g$ Huq-commute.
Since $(B\to 0)^*: \Pt_{0}(\C) \cong \C \to \Pt_{B}(\C)$ preserves binary coproducts the top part of the diagram is a coproduct in $\Pt_{B}(\C)$
and we obtain the morphism $\theta$ forming part of the induced morphism from the coproduct.
It now follows that $[f,f']:A+A'\to C$ and $g$ Huq-commute with cooperator $\pi_1 \circ  \theta$. Letting $[f,f'] = m\circ e$ be the (regular epi, mono)-factorization of $[f,f']$, with $m \colon f(A)\vee f'(A') \to C$, Proposition 1.6.4 \cite{BB}
tells us the $m$ and $g$ commute.  To keep the paper self contained let us recall why: the cooperator $ \pi_1\circ\theta \colon (A+A')\times B \to C$ of $[f,f']$ and $g$ coequalizes the kernel pair $(\Eq(e)\times B,\, p_1 \times 1_B,\, p_2\times 1_B)$ of the regular epimorphism $e\times 1_B$, where $(\Eq(e), p_1, p_2)$ is the kernel pair of $e$,
(for this one uses the fact that the pair $\langle 1,0\rangle \colon \Eq(e) \to \Eq(e)\times B$ and $\langle 0,1\rangle \colon B \to \Eq(e)\times B$ is jointly epimorphic),
 hence $\pi_1\circ\theta$ factors as $\varphi''\circ(e\times 1_B)$, and $\varphi''$ is a cooperator for $m$ and $g$.
The proof of Theorem 1.1 of \cite{GRAY:2013a} can now be used to complete the proof.  We recall the argument: let $X$, $Y$ and $Z$ be subobjects of an object $A$ of $\C$, and put
$N = [X,Z]\vee[Y,Z]$, with quotient $q \colon A \to A/N$. Since $[X,Z] \leq N$, the
subobjects $q(X)$ and $q(Z)$ of $A/N$ commute, and likewise $q(Y)$ and $q(Z)$. By what
we have just proved, $q(X)\vee q(Y) = q(X \vee Y)$ commutes with $q(Z)$, so that
$[X\vee Y, Z] \leq N = [X,Z]\vee[Y,Z]$. The reverse inequality follows from the
monotonicity of the Huq commutator, and the proposition is proved.
\end{proof}

\subsection{Smith commutators and the condition (SH)}
For an object $A$ of $\C$ we write $\Delta_A$ for the discrete (smallest)
equivalence relation on $A$ and $\nabla_A$ for the indiscrete (largest) one; for an
extension $f \colon A \to B$ we write $\Eq(f)$ for its kernel pair,
regarded as an (effective) equivalence relation on $A$.

Every equivalence relation gives rise to a normal subobject of the
object it lives on, and this passage is what allows the Smith commutator
and the Huq commutator to be compared \cite{BG02}.

\begin{definition}
Let $(R, r_1, r_2)$ be an equivalence relation on an object $A$. Its
\emph{normalization} $I_R$ is the normal subobject of $A$ obtained as the
composite
\[
\begin{DIFnomarkup}\begin{tikzcd}[ampersand replacement=\&, column sep=large]
I_R \arrow[r, tail, "{\ker(r_1)}"] \& R \arrow[r, "{r_2}"] \& A ,
\end{tikzcd}\end{DIFnomarkup}
\]
that is, the image under the second projection of the kernel of the
first. Since $R$ is a relation, the composite is a monomorphism, and one
shows that $I_R$ is normal in $A$ \cite{Bourn91, BB}.
\end{definition}

Informally, $I_R$ collects those elements of $A$ which are
$R$-equivalent to $0$.  For the kernel pair of an extension
$f \colon A \to B$ with kernel $K$ one has $I_{\Eq(f)} = K$, and $I_{\nabla_A} = A$. In a
semi-abelian category the assignment $R \mapsto I_R$ is an isomorphism
between the lattice of equivalence relations  on $A$ and the lattice of
normal subobjects of $A$~\cite{Bourn91, BG02}.

Given two equivalence relations $R$ and $S$ on $A$, we write $[R,S]_{\mathsf{Smith}}$ for
the Smith--Pedicchio commutator of $R$ and $S$~\cite{Smith,Ped}. One has
$[R,S]_{\mathsf{Smith}} = \Delta_A$ precisely when $R$ and $S$ \emph{centralize} each
other, which in the present exact Mal'tsev context is equivalent to the
existence of a (then necessarily unique) \emph{double centralizing
equivalence relation} $C$ on $R$ and $S$: a double equivalence relation
\[
\begin{DIFnomarkup}\begin{tikzcd}[ampersand replacement=\&, column sep=huge, row sep=large]
C
  \arrow[r, shift left=0.9ex, "{\pi_1}"] \arrow[r, shift right=0.9ex, "{\pi_2}"']
  \arrow[d, shift left=0.9ex, "{\rho_1}"] \arrow[d, shift right=0.9ex, "{\rho_2}"']
\& S
  \arrow[d, shift left=0.9ex, "{s_1}"] \arrow[d, shift right=0.9ex, "{s_2}"'] \\
R
  \arrow[r, shift left=0.9ex, "{r_1}"] \arrow[r, shift right=0.9ex, "{r_2}"']
\& A
\end{tikzcd}\end{DIFnomarkup}
\]
in which $r_1, r_2$ and $s_1, s_2$ are the projections of $R$ and of $S$,
while $\pi_1, \pi_2$ and $\rho_1, \rho_2$ are the two pairs of
projections of $C$, seen as an equivalence relation on $S$ and on $R$
respectively, so that $r_i \circ \rho_j = s_j \circ \pi_i$ for
$i,j \in \{1,2\}$. That $C$ is \emph{centralizing} means that the induced
comparison $\langle \rho_1, \pi_1 \rangle \colon C \to R \times_A S$ (the
pullback of $r_1$ and $s_1$)  is an isomorphism; in that
case each of the four commutative squares $r_i \circ \rho_j = s_j \circ
\pi_i$ is a pullback. 

 The category $\C$ satisfies the \emph{Smith is Huq} condition, denoted
by (SH), when for any pair of equivalence relations $R$ and $S$ on the
same object $A$ one has
\[
  [R,S]_{\mathsf{Smith}} = \Delta_A
  \qquad \Longleftrightarrow \qquad
  [I_R, I_S] = 0 ,
\]
the commutator on the right being the Huq commutator of the
normalizations. The implication from left to right holds in any
semi-abelian category, and so does the equivalence when one of the two
relations is $\nabla_A$ \cite{GVdL}; the content of (SH) is the
implication from right to left for arbitrary $R$ and $S$. The condition
(SH) holds in every algebraically coherent semi-abelian category, as
shown in~\cite[Section 6]{CGVdL}.

The normalization of a Smith commutator of equivalence
relations $R$ and $S$ is also called the Ursini commutator of $I_R$ and $I_S$ in \cite{Mantovani}. Under the (SH) condition it coincides with the Huq commutator of the normalizations, that is,
$I_{[R,S]_{\mathsf{Smith}}} = [I_R,I_S]$ \cite{MFVdL, Mantovani}.

In particular, for an extension $f \colon A \to B$
with kernel $K$, the commutator $[\Eq(f),\nabla_A]_{\mathsf{Smith}}$ has
normalization $[K,A]$, and $[\nabla_A,\nabla_A]_{\mathsf{Smith}}$ has normalization
$[A,A]$.

Finally, recall that an internal reflexive graph $\mathbb{G} $ in $\C$ is a diagram
$G_1 \rightrightarrows G_0$ together with a common splitting of the two
morphisms, and that in a Mal'tsev category being an internal groupoid is
a property of a reflexive graph rather than extra structure \cite{CLP}. As shown
in~\cite{Gran99}, in a regular Mal'tsev category internal groupoids are
closed in the category of reflexive graphs under regular quotients: if
$\mathbb{G} \to \mathbb{H}$ is a morphism of reflexive graphs which is
levelwise a regular epimorphism and $\mathbb{G}$ is an internal groupoid,
then so is $\mathbb{H}$.

\section{Central extensions in the sense of Galois theory}\label{sec:galois}
From now on $\C$ denotes a semi-abelian category.
 \begin{definition}
An object $A$ of $\C$ is \emph{$2$-nilpotent} (nilpotent of class at most
$2$) when $[[A,A],A] = 0$. We write $\Nil_2(\C)$ for the full subcategory
of $\C$ determined by the $2$-nilpotent objects.
\end{definition}

The subcategory $\Nil_2(\C)$ is a Birkhoff subcategory of $\C$ \cite{Huq, JK94}: it is
full, closed under subobjects and regular quotients (this essentially follows from the good properties of the Huq commutator recalled in Section~\ref{ssec:huq}), and the reflector
\[
  \operatorname{nil}_2 \colon \C \longrightarrow \Nil_2(\C),
  \qquad
  A \longmapsto A/[[A,A],A],
\]
is left adjoint to the inclusion $\Nil_2(\C) \rightarrow \C$, the unit at $A$ being the canonical
quotient $\eta_A \colon A \to A/[[A,A],A]$. For $\C = \Grp$ one recovers
the classical variety of groups of nilpotency class at most~$2$. There
are Birkhoff inclusions
\[
  \Ab(\C) \;\subseteq\; \Nil_2(\C) \;\subseteq\; \C ,
\]
and the characterization below is the second step of the resulting
nilpotency tower, the first step being the description of
$\Ab(\C)$-central extensions recalled in the preliminaries.

We recall the relevant notions from~\cite{JK94,JK00}. Let
$I \colon \C \to \mathcal{X}$ be the reflector onto a Birkhoff
subcategory $\mathcal{X}$ of $\C$, with unit $\eta$.

\begin{definition}\label{def:trivial}
An extension $f \colon A \to B$ is \emph{trivial} (relative to
$\mathcal{X}$) when the naturality square
\[
\begin{DIFnomarkup}\begin{tikzcd}[ampersand replacement=\&, column sep=large, row sep=large]
A \arrow[r, "{\eta_A}"] \arrow[d, "{f}"'] \& I(A) \arrow[d, "{I(f)}"] \\
B \arrow[r, "{\eta_B}"'] \& I(B)
\end{tikzcd}\end{DIFnomarkup}
\]
is a pullback. It is \emph{central} when there exists an extension
$p \colon E \to B$ such that the pullback $\pi_1 \colon E \times_B A \to
E$ of $f$ along $p$ is a trivial extension, and it is \emph{normal} when
one may take $p = f$, that is, when the first projection
$f_1 \colon \Eq(f) \to A$ of its kernel pair  is a trivial extension.
\end{definition}
 Every normal extension is central by
definition, and a fundamental result of Janelidze and Kelly~\cite{JK94}
asserts that the converse holds when $\C$ is Barr-exact Mal'tsev and
$\mathcal{X}$ is a Birkhoff subcategory, which is the case here. Recall also that, since $\mathcal{X}$ is a Birkhoff subcategory, the naturality square above is always a \emph{regular pushout} \cite{JK94, Bourn03} (also called a \emph{double extension} \cite{Jan91}): this means that all its arrows are regular epimorphisms, and so is the comparison morphism $\langle f, \eta_A\rangle \colon A \to B \times_{I(B)} I(A)$ to the pullback of $I(f)$ along $\eta_B$. Consequently, the square is a pullback as soon as $f$ and $\eta_A$ are jointly monomorphic, since the comparison morphism is then both a monomorphism and a regular epimorphism. The following pullback stability
result is  the first step of our argument.

\begin{lemma}\label{lem:pullback}
Let $f \colon A \to B$ and $p \colon E \to B$ be extensions in $\C$, and
consider the pullback of $f$ along $p$:
\[
\begin{DIFnomarkup}\begin{tikzcd}[ampersand replacement=\&, column sep=large, row sep=large]
E \times_B A \arrow[r, "{\pi_2}"] \arrow[d, "{\pi_1}"'] \& A \arrow[d, "{f}"] \\
E \arrow[r, "{p}"'] \& B .
\end{tikzcd}\end{DIFnomarkup}
\]
Write $K = \Ker(f)$ and identify, as usual, $\Ker(\pi_1)$ with $K$.
Then
\[
  [A,[K,A]] = 0
  \qquad \Longleftrightarrow \qquad
  \bigl[E \times_B A,\; [K, E \times_B A]\bigr] = 0 .
\]
\end{lemma}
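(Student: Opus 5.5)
Write $P = E \times_B A$. The kernel of $\pi_1$ is $\langle 0,k\rangle \colon K \to P$, and it is a normal subobject of $P$. The plan is to compare the two iterated commutators using the projection $\pi_2 \colon P \to A$. This projection is a regular epimorphism, being the pullback of the regular epimorphism $p$. It also restricts to the identity on $K$, meaning $\pi_2\circ\langle 0,k\rangle = k$.

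\emph{Direction ($\Leftarrow$).} Suppose $[P,[K,P]]=0$. Huq commutators are preserved by regular images, so applying this twice gives
\[\pi_2\bigl([P,[K,P]]\bigr) = [\pi_2(P),\pi_2([K,P])] = [A,[\pi_2(K),A]] = [A,[K,A]].\]
Hence $[A,[K,A]]=0$.

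\emph{Direction ($\Rightarrow$).} Suppose $[A,[K,A]]=0$. I would first locate $[K,P]$ inside $K$.
\begin{itemize}
\item Since $K$ is normal in $P$, we have $[K,P]\le K$, so $[K,P]$ is a subobject of $K\subseteq P$.
\item On $K$ the map $\pi_2$ is a monomorphism, and it sends $[K,P]$ onto $[K,A]$. Hence, viewed inside $K$, we get $[K,P]=[K,A]$, and I write $L$ for this common subobject of $K$.
\end{itemize}
It then remains to show that $L\le P$ commutes with $P$.

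To do this I would cover $P$ by two subobjects and use distributivity of the Huq commutator (Proposition~\ref{alg_coh=>Huq_dist}). The first subobject is $K$ itself, with $[L,K]\le[L,P]$ handled below. The second should be a subobject mapping isomorphically onto $A$. In general there is no section of $\pi_1$, so I instead use the join $K\vee \Delta$, where $\Delta$ is a suitable ``diagonal-like'' subobject.

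Rather than work with an ad hoc $\Delta$, I would use the standard fact that $P$ is a subobject of $E\times A$. I then compute in $E\times A$:
\begin{itemize}
\item Inside $E\times A$, the subobject $0\times L$ commutes with $E\times 0$.
\item $0\times L$ commutes with $0\times A$, since $[L,A]=[A,[K,A]]=0$.
\item $E\times 0$ and $0\times A$ jointly cover $E\times A$, so distributivity gives $[0\times L,E\times A]=0$.
\end{itemize}
By monotonicity, $L$ and $P$ then cooperate as morphisms into $E\times A$. The key observation is that a Huq cooperator $\varphi\colon L\times P\to E\times A$ factors through the monomorphism $P\to E\times A$. To check this I would compose with $f\pi_2$ and $p\pi_1$ and verify that these agree. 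The composite with the $A$-component is $f\circ\varphi_A$, and it equals $f$ of the cooperator of $L\le A$ with $A$. Because $f$ kills $L$, this is $f\circ\pi_2\circ\mathrm{pr}_P$, which matches $p\circ\varphi_E = p\pi_1\mathrm{pr}_P$. The identity can be checked via joint epimorphicity of the two product injections into $L\times P$, which holds since $\C$ is unital. Thus $\varphi$ factors through $P$, so $L$ and $P$ cooperate in $P$. Since cooperation implies commutation, $[L,P]=0$, i.e.\ $[P,[K,P]]=0$.

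The main obstacle is this factorization step: making sure the cooperator built in $E\times A$ factors through $P$. This requires a careful jointly-epimorphic argument, using that $f$ annihilates $L\le K$.
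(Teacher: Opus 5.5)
Your proof is correct, and your ($\Leftarrow$) direction coincides with the paper's. For ($\Rightarrow$) you take a more hands-on route.

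\textbf{Comparison with the paper.} The paper never constructs a cooperator. Putting $M=[K,P]$, it notes that $[P,M]\le M\le K$ and that $\pi_2([P,M])=[A,[K,A]]=0$. Hence $[P,M]\le K\wedge\Ker(\pi_2)=0$, the meet vanishing because $\pi_1,\pi_2$ are jointly monic.

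You instead identify $[K,P]$ with $[K,A]$ through the mono $\pi_2|_K$. You then build a cooperator inside $E\times A$ and check that it factors through $P\rightarrowtail E\times A$, using $f(L)=0$ and $p\pi_1=f\pi_2$. So joint monicity of $(\pi_1,\pi_2)$ enters through the embedding into $E\times A$ rather than through a meet of kernels.

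Your version exhibits the centralizing morphism explicitly. The paper's is a three-line lattice argument that uses only preservation of Huq commutators by regular images and meets of normal subobjects. It is also the form reused for Lemma~\ref{lem:pullback-n}.

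\textbf{One justification needs adjusting.} Proposition~\ref{alg_coh=>Huq_dist} is proved under algebraic coherence. Lemma~\ref{lem:pullback}, however, is stated for an arbitrary semi-abelian $\C$ (the remark after it stresses this), and there distributivity of the Huq commutator over joins can fail. Citing it therefore silently strengthens the hypotheses.

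You do not need it here. Write $\pi_E,\pi_A$ for the product projections of $E\times A$, $\mathrm{pr}_2$ for the projection $L\times(E\times A)\to E\times A$, and $l\colon L\to A$ for the inclusion. If $\phi\colon L\times A\to A$ is the cooperator of $l$ with $1_A$, then $\langle \pi_E\circ\mathrm{pr}_2,\ \phi\circ(1_L\times\pi_A)\rangle\colon L\times(E\times A)\to E\times A$ is directly a cooperator of $\langle 0,l\rangle$ with $1_{E\times A}$, in any pointed category with finite products. This is in fact the cooperator your factorization check implicitly uses, since its $E$-component restricts to $\pi_1\circ\mathrm{pr}_P$ on $L\times P$.

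With that replacement, your argument works in any semi-abelian category.
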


\begin{proof}
Write $P = E \times_B A$ and $L = \Ker(\pi_2)$, so that $L$ is
canonically isomorphic to $\Ker(p)$. Since $p$ is a regular epimorphism,
so is its pullback $\pi_2$; and the restriction of $\pi_2$ to
$\Ker(\pi_1)$ is precisely the inclusion $K \to A$, whence
$\pi_2(K) = K$. Moreover $K \wedge L = 0$,  since
$K \wedge L = \Ker(\pi_1) \wedge \Ker(\pi_2)$, with $\pi_1$ and $\pi_2$ jointly monomorphic.

Suppose first that $[P,[K,P]] = 0$. Applying the regular epimorphism
$\pi_2$, and using that regular epimorphisms preserve Huq commutators,
we get
\[
  0 \;=\; \pi_2\bigl([P,[K,P]]\bigr)
    \;=\; \bigl[\pi_2(P), [\pi_2(K),\pi_2(P)]\bigr]
    \;=\; [A,[K,A]] .
\]

Conversely, suppose that $[A,[K,A]] = 0$, and put $M = [K,P]$, a normal
subobject of $P$ with $M \leq K \wedge P = K$. On the one hand
$[P,M] \leq M \leq K$, the Huq commutator of two normal subobjects being
contained in their meet. On the other hand $\pi_2(M) = [\pi_2(K),
\pi_2(P)] = [K,A]$, whence
\[
  \pi_2\bigl([P,M]\bigr) \;=\; [\pi_2(P), \pi_2(M)] \;=\; [A,[K,A]] \;=\; 0 ,
\]
so that $[P,M] \leq \Ker(\pi_2) = L$. Therefore
$[P,[K,P]] = [P,M] \leq K \wedge L = 0$, as desired.
\end{proof}

\begin{remark}
The proof above uses no hypothesis beyond semi-abelianness; in
particular, neither algebraic coherence nor the condition (SH) is needed
for Lemma~\ref{lem:pullback}. We'll see that Lemma~\ref{lem:pullback} is the case $n=2$ of Lemma~\ref{lem:pullback-n} below, whose proof is essentially the same.
\end{remark}

\section{The characterization}

\begin{theorem}\label{thm:main}
Let $\C$ be a semi-abelian, algebraically coherent category, and let
$f \colon A \to B$ be an extension in $\C$ with kernel $K$.
Then $f$ is a central extension relative to the Birkhoff subcategory
$\Nil_2(\C)$ of $2$-nilpotent objects if and only if
\[
  [A,[K,A]] \;=\; 0 ,
\]
where $[\,\cdot\,,\cdot\,]$ denotes the Huq commutator; equivalently, if
and only if the normal subobject $[K,A]$ of $A$ is central in $A$.
\end{theorem}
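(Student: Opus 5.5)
We prove the two implications separately. By the Janelidze--Kelly result recalled above, central and normal extensions coincide. So $f$ is central exactly when the first projection $f_1\colon \Eq(f)\to A$ is a trivial extension relative to $\Nil_2(\C)$. Its kernel is again $K$, viewed as $\Ker(f_1)$ inside $\Eq(f)$.

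\emph{Necessity.} Suppose $f$ is central, with some extension $p\colon E\to B$ along which the pullback $\pi_1\colon P=E\times_B A\to E$ is trivial. Then $P$ is the pullback of $E\to I(E)$ and $I(P)\to I(E)$. So $P$ embeds into $E\times I(P)$ and $\Ker(\pi_1)=K$ embeds into $0\times I(P)$, i.e.\ $\eta_P$ restricted to $K$ is a monomorphism. Hence $K\wedge[[P,P],P]=0$. Since $[K,P]\le K$ and $[P,[K,P]]\le [P,[P,P]]\wedge K$ (all normal), we get $[P,[K,P]]=0$. Lemma~\ref{lem:pullback} then gives $[A,[K,A]]=0$. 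The only point to check is that the Huq commutators here are normal, so the meet bounds apply.

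\emph{Sufficiency.} Assume $[A,[K,A]]=0$. By Lemma~\ref{lem:pullback} with $p=f$, the object $R=\Eq(f)$ satisfies $[R,[K,R]]=0$. By \eqref{eq:three-instance} applied in $A$, also $[K,[A,A]]=0$. By (SH) this means $\Eq(f)$ and $[\nabla_A,\nabla_A]_{\mathsf{Smith}}=:S$ centralize, giving a centralizing double relation $C$ on $\Eq(f)$ and $S$. View $C$ as an equivalence relation on $R=\Eq(f)$, namely the relation ``$\rho_1,\rho_2$'' covering $S$ under $f_1$.

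Let $q\colon R\to R/C$. I aim to show:
\begin{enumerate}
\item $q$ restricted to $\Ker(f_1)=K$ is a monomorphism. Indeed $I_C$ is the kernel of $C\to R$ along one leg, which sits over $I_S=[A,A]$. Using the pullback property of the centralizing squares, $I_C\wedge K=0$.
\item $R/C$ is $2$-nilpotent.
\end{enumerate}
Then $[[R,R],R]\le I_C$, so $\eta_R$ factors through $q$ and restricts injectively on $K$. Together with $f_1$ this makes $f_1$ and $\eta_R$ jointly monic, since $\Ker f_1\wedge\Ker\eta_R=0$ and the category is protomodular. By the regular pushout remark, the naturality square is a pullback, so $f_1$ is trivial.

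The main obstacle is (2). I would show that $I_C$, as a normal subobject of $R$, contains $[[R,R],R]$. Write $R=K\rtimes A$ via the diagonal, i.e.\ $R=K\vee\Delta(A)$. By Proposition~\ref{alg_coh=>Huq_dist}, $[R,R]$ is the join of $[K,K]$, $[K,\Delta A]$ and $[\Delta A,\Delta A]$. Then $[[R,R],R]$ is the join of triple commutators, each of which either involves $[K,R]$ against $R$, and so vanishes by $[R,[K,R]]=0$ after the three subobjects lemma, or equals $[[\Delta A,\Delta A],\Delta A]$. This last term must be killed modulo $I_C$. Here $I_C$ should be exactly the copy of $[A,A]$ embedded ``diagonally'' as $\{(s,s)\}$ modulo the identification via $C$, and quotienting by it identifies $\Delta[A,A]$ with a subobject of $K$-type. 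Thus $[[\Delta A,\Delta A],\Delta A]$ becomes a commutator of $A$ with something inside the image of $K$, which has order-$2$ commutator zero. Making this identification precise categorically, presumably via the groupoid-closure result of \cite{Gran99} applied to the quotient of the reflexive graph $R\rightrightarrows A$ by $C$, is where the real work lies.
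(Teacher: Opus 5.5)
Your necessity argument is correct, and a little more economical than the paper's. Because it works with an arbitrary $p$, it never needs the Janelidze--Kelly theorem in that direction, whereas the paper first reduces to $p=f$ via normality.

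In the sufficiency part your reduction matches the paper's:
\begin{itemize}
\item $q$ is the paper's $\psi$.
\item Your (1) is the joint monicity of $q$ and $f_1$. The paper gets it from the Barr--Kock theorem. Your direct route also works: centrality of $C$ together with its reflexivity over $S$ forces $I_C=\delta([A,A])$, where $\delta\colon A\to R$ is the diagonal. This meets $\Ker(f_1)$ trivially because $f_1\delta=1_A$.
\item Your (2) is the $2$-nilpotency of $E=\Eq(f)/C$.
\end{itemize}
One slip: from $[[R,R],R]\le I_C$ it is $q$ that factors through $\eta_R$, not the other way round. That is what makes $\eta_R$ injective on $K$.

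The genuine gap is (2). You leave it open, and the mechanism you sketch is not the right one. Quotienting by $I_C=\delta([A,A])$ does not turn $\delta([A,A])$ into something ``of $K$-type''; it simply kills it. The missing idea is that the diagonal copy of $A$ becomes \emph{abelian} modulo $C$. Here is how to see it, using the notation of the double-relation diagram in the preliminaries (with $r_i=f_i$).
\begin{itemize}
\item Let $e\colon S\to C$ be the reflexivity of $C$ viewed as a relation on $S$.
\item From $r_i\circ\rho_j=s_j\circ\pi_i$ and the joint monicity of $r_1,r_2$ one gets $\rho_j\circ e=\delta\circ s_j$ for $j=1,2$.
\item Hence $q\circ\delta$ coequalizes $s_1,s_2$, and it factors as $\bar\delta\circ\varphi$ through the quotient $\varphi\colon A\to A/S\cong A/[A,A]$.
\item So $q(\delta(A))$ is a regular image of the abelian object $A/[A,A]$. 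It follows that $q([\delta A,\delta A])=[q\delta A,q\delta A]=0$, i.e.\ $[\delta A,\delta A]\le I_C$.
\item Since $R=K\vee\delta(A)$, distributivity gives $[R,R]\le[K,R]\vee[\delta A,\delta A]$.
\item Therefore $[[R,R],R]\le[[K,R],R]\vee[I_C,R]\le I_C$, using $[R,[K,R]]=0$ and the normality of $I_C$.
\end{itemize}
Neither the Three Subobjects Lemma nor a groupoid-closure argument is needed at this point. This is precisely the paper's argument, phrased there in $E$: one has $E=K\vee Q$ with $Q\cong A/[A,A]$ abelian and $[E,[K,E]]=0$.
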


\begin{proof}
Assume that $[A,[K,A]] = 0$. By the instance~\eqref{eq:three-instance} of
the Three Subobjects Lemma this implies
\[
  [K,[A,A]] \;=\; 0 .
\]
Now the normalization of the equivalence relation $\Eq(f)$ on $A$ is $K$,
and the normalization of $[\nabla_A,\nabla_A]_{\mathsf{Smith}}$ is $[A,A]$; the condition
(SH), which holds since $\C$ is algebraically coherent, therefore turns
the vanishing of the Huq commutator $[K,[A,A]]$ into
the vanishing of the Smith commutator
\[
  \bigl[\,\Eq(f),\; [\nabla_A,\nabla_A]_{\mathsf{Smith}} \,\bigr]_{\mathsf{Smith}} \;=\; \Delta_A .
\]
Accordingly, the equivalence relations $\Eq(f)$ and $[\nabla_A,\nabla_A]_{\mathsf{Smith}}$
centralize each other, and there is a unique double centralizing
equivalence relation $C$ on them. Taking the quotients in the vertical
direction, we obtain the diagram
\begin{equation}\label{diag:key}
\begin{DIFnomarkup}\begin{tikzcd}[ampersand replacement=\&, column sep=huge, row sep=large]
C
  \arrow[r, shift left=0.9ex, "{p_1}"] \arrow[r, shift right=0.9ex, "{p_2}"']
  \arrow[d, shift left=0.9ex, "{c_1}"] \arrow[d, shift right=0.9ex, "{c_2}"']
\& {[\nabla_A,\nabla_A]_{\mathsf{Smith}}}
  \arrow[d, shift left=0.9ex, "{d_1}"] \arrow[d, shift right=0.9ex, "{d_2}"'] \\
\Eq(f)
  \arrow[r, shift left=0.9ex, "{f_1}"] \arrow[r, shift right=0.9ex, "{f_2}"']
  \arrow[d, two heads, "{\psi}"']
\& A
  \arrow[d, two heads, "{\varphi}"] \\
\Eq(f)/C
  \arrow[r, shift left=0.9ex, "{\bar{f}_1}"]
  \arrow[r, shift right=0.9ex, "{\bar{f}_2}"']
\& A/[\nabla_A,\nabla_A]_{\mathsf{Smith}}
\end{tikzcd}\end{DIFnomarkup}
\end{equation}
in which $f_1, f_2$ are the projections of the kernel pair of $f$, the
pair $d_1, d_2$ consists of the projections of the equivalence relation
$[\nabla_A,\nabla_A]_{\mathsf{Smith}}$ on $A$, and $c_1, c_2$ and $p_1, p_2$ are the two
pairs of projections of the double equivalence relation $C$, the first
pair over $\Eq(f)$ and the second over $[\nabla_A,\nabla_A]_{\mathsf{Smith}}$, so that
$f_i \circ c_j = d_j \circ p_i$ for $i,j \in \{1,2\}$. The
morphisms $\psi$ and $\varphi$ are the quotients of the two vertical
equivalence relations, the latter having codomain
$A/[\nabla_A,\nabla_A]_{\mathsf{Smith}} \cong A/[A,A]$. The
composite $\varphi \circ f_i$ coequalizes $c_1$ and $c_2$, and there are
unique induced morphisms $\bar{f}_1, \bar{f}_2$ with
 $\bar{f}_i \circ \psi = \varphi \circ f_i$ for $i = 1,2$. To lighten the
notation we shall write
\[
  E \;=\; \Eq(f)/C \qquad\text{and}\qquad Q \;=\; A/[\nabla_A,\nabla_A]_{\mathsf{Smith}}
\]
for the quotients appearing in the lower left and right corner
of~\eqref{diag:key}, respectively.

The lower part of~\eqref{diag:key} is then a morphism of internal
reflexive graphs
\[
  \bigl(\Eq(f) \rightrightarrows A\bigr)
  \longrightarrow
  \bigl(E \rightrightarrows Q\bigr) ,
\]
levelwise a regular epimorphism.  The codomain of this morphism is again an internal
groupoid: the domain $\Eq(f) \rightrightarrows A$ is one, being an
equivalence relation, and internal groupoids are closed under regular
quotients in any regular Mal'tsev category~\cite{Gran99}, hence in
particular in the semi-abelian category $\C$.
 We next observe that this morphism is a \emph{discrete fibration},
i.e.\ that the square
  \begin{equation}\label{diag:pb}
\begin{tikzcd}[ampersand replacement=\&, column sep=huge, row sep=large]
\Eq(f) \arrow[r, two heads, "{\psi}"] \arrow[d, "{f_1}"'] \& E
  \arrow[d, "{\bar{f}_1}"] \\
A \arrow[r, two heads, "{\varphi}"'] \& Q
\end{tikzcd}
\end{equation}
is a pullback. Indeed, the two vertical columns of~\eqref{diag:key} are the kernel pairs of $\psi$ and of $\varphi$, respectively, and the upper squares are pullbacks because $C$ is a centralizing double relation. Since $\varphi$ is a regular epimorphism, hence an effective descent morphism in the exact category $\C$, the Barr--Kock Theorem (see Example 6.10 in \cite{Barr}) implies that each of the lower squares is a pullback as well.
The next step in our proof is to show that $E$ is $2$-nilpotent. Since $\Eq(f)$ is
obtained by pulling back $f$ along itself, Lemma \ref{lem:pullback} tells us that
$[\Eq(f),[K,\Eq(f)]]=0$. Since \eqref{diag:pb} is a pullback, the kernel of $\bar{f}_1$ is (isomorphic to) $K$ and $\psi(K)=K$; as $\psi$ is a regular epimorphism, it follows that $[E,[K,E]]=\psi([\Eq(f),[K,\Eq(f)]]) = 0$. Since $\bar{f}_1$ is a split epimorphism (the graph $E \rightrightarrows Q$ being reflexive),
we can view $Q$ as a subobject of $E$ and, as for any split extension in
a semi-abelian category, $E=K\vee Q$. Moreover, $Q \cong A/[A,A]$ is abelian, so that $[Q,Q]=0$ also when the commutator is computed in $E$ (see Section~\ref{ssec:huq}). Using
the fact that Huq commutators distribute over joins and are symmetric and monotone,
together with $[E,[K,E]]=0$ and $[Q,Q]=0$, we obtain
\begin{align*}
[E,[E,E]] &= [E,[K\vee Q,K\vee Q]]\\
&= [E,[K,K]] \vee [E,[K,Q]] \vee [E,[Q,K]] \vee [E,[Q,Q]]\\
&\leq [E,[K,E]] \vee [E,[K,E]] \vee [E,[K,E]] \vee [E,0]\\
&=0,
\end{align*}
as desired.
Let now $I = \operatorname{nil}_2 \colon \C \to \Nil_2(\C)$ be the reflector, with unit $\eta$. Since $E$ and $Q$ are $2$-nilpotent, via the universal property of the reflector there are unique morphisms $u$ and $v$ making the diagram
\[
\begin{tikzcd}[ampersand replacement=\&, column sep=huge, row sep=large]
\Eq(f) \arrow[bend left=20,rr, two heads, "{\psi}"] \arrow[r, two heads, "{\eta_{\Eq(f)}}"] \arrow[d, "{f_1}"'] \& I(\Eq(f)) \arrow[r, two heads, "{u}"] \arrow[d, "{I(f_1)}"'] \& E
  \arrow[d, "{\bar{f}_1}"] \\
A \arrow[bend right=20,rr, two heads, "{\varphi}"']\arrow[r, two heads, "{\eta_{A}}"'] \& I(A)\arrow[r, two heads, "{v}"'] \& Q
\end{tikzcd}
\]
commute. The square~\eqref{diag:pb} being a pullback, the morphisms $\psi$ and $f_1$ are jointly monomorphic, hence so are $\eta_{\Eq(f)}$ and $f_1$. Since the left hand square is a regular pushout, as recalled after Definition~\ref{def:trivial}, it follows that it is a pullback (see also Lemma 1.1 in \cite{Gran01}), so that $f_1$
 is a trivial extension and $f$ is a normal, hence central, extension.

Conversely, suppose that $f$ is central. By the results in \cite{JK94} it is then
normal, i.e.\ $f_1 \colon \Eq(f) \to A$ is a trivial extension, so that
$\eta_{\Eq(f)}$ and $f_1$ are jointly monomorphic and, consequently,
$\Ker(\eta_{\Eq(f)}) \wedge K = 0$, where
$$\Ker(\eta_{\Eq(f)}) = [\Eq(f),[\Eq(f),\Eq(f)]]$$ and $K = \Ker(f_1)$. Since
$[\Eq(f),[K,\Eq(f)]] \leq [\Eq(f),[\Eq(f),\Eq(f)]] \wedge K$ by monotonicity
and normality, it follows that $[\Eq(f),[K,\Eq(f)]] = 0$, and
Lemma~\ref{lem:pullback}, applied to the pullback of $f$ along itself,
gives $[A,[K,A]] = 0$.
\end{proof}
\begin{remark}\label{rem:decomp}
The proof above shows that, similarly to the case 
of central extensions relative to $\Ab(\C)$ \cite{Gran-Applications}, the kernel pair of a $\Nil_2(\C)$-central extension $f \colon A \to B$ decomposes as a pullback: $\Eq(f) \cong A \times_{Q} E$, where $Q = A/[A,A]$ is abelian and $E = \Eq(f)/C$ is $2$-nilpotent.
\end{remark}
\section{The $n$-nilpotent case}\label{sec:n}

\newcommand{\gam}[3][]{{}^{\vphantom{#1}}_{#2}\!\gamma^{#1}_{#3}}
\newcommand{\nabk}[1]{\nabla_A^{[#1]}}

In this section we show that our characterization generalises to the Birkhoff subcategory of $n$-nilpotent objects, for any $n \geq 1$. Throughout the section, $\C$ is a semi-abelian algebraically coherent category.
We begin by fixing some notation for iterated commutators.
For an object $A$ and a subobject $S$ of $A$, let us denote by $\gam{A}{S}$
the map assigning to each subobject $T$ of $A$ the Huq commutator $[S,T]$ of
$S$ and $T$, computed in $A$. The subscript $A$ on the left records the object in which the commutator is computed: for $n \geq 1$, 
\[
  \gam[n]{A}{S}(T) \;=\; \underbrace{[S,[S,\dots,[S,T]\dots]]}_{\text{$n$ occurrences of $S$}} .
\]
Each $\gam[n]{A}{S}$ is monotone, and it distributes over joins of subobjects since the Huq commutator does. When $S$ and $T$ are normal in $A$, so is $\gam[n]{A}{S}(T)$. In particular, $\gam[n]{A}{A}(A)$ is the $n$-th term of the lower central series of $A$, and $\gam[n]{A}{A}(K)$ is the subobject appearing in the statement of Theorem~\ref{thm:n-main} below.

\begin{definition}
An object $A$ of $\C$ is \emph{$n$-nilpotent} (nilpotent of class at most $n$) when $\gam[n]{A}{A}(A) = 0$. We write $\Nil_n(\C)$ for the full subcategory of $\C$ determined by the $n$-nilpotent objects.
\end{definition}
The definition makes sense for $n = 0$ as well: since $\gam[0]{A}{A}$ is the identity, an object $A$ is $0$-nilpotent precisely when $A = 0$, so that $\Nil_0(\C)$ is the full subcategory of $\C$ consisting of the zero objects.
Thus $\Nil_1(\C) = \Ab(\C)$, and for $n = 2$ we recover the definition of Section~\ref{sec:galois}. As in that case, $\Nil_n(\C)$ is a Birkhoff subcategory of $\C$ \cite{Huq} (see also \cite{Gray19} for nilpotency in semi-abelian categories), the reflector being $A \mapsto A/\gam[n]{A}{A}(A)$ with the canonical quotient as unit. Note that $\gam[n]{A}{A}(A) \leq \gam[n-1]{A}{A}(A)$, so that an $(n-1)$-nilpotent object is $n$-nilpotent, and there is a tower of Birkhoff subcategories $\Ab(\C) = \Nil_1(\C) \subseteq \Nil_2(\C) \subseteq \cdots \subseteq \C$.

The aim of this section is to prove the following theorem:
\begin{theorem}\label{thm:n-main}
Let $\C$ be a semi-abelian, algebraically coherent category, let $n \geq 1$, and let
$f \colon A \to B$ be an extension in $\C$ with kernel $K$.
Then $f$ is a central extension relative to the Birkhoff subcategory
$\Nil_n(\C)$ of $n$-nilpotent objects if and only if
\[
  \gam[n]{A}{A}(K)\; = \;\underbrace{[A,\dots,[A,K]\dots]}_{\text{$n+1$ terms}} \;=\; 0 ,
\]
equivalently, if
and only if the normal subobject $\gam[n-1]{A}{A}(K)$ of $A$ is central in $A$.
\end{theorem}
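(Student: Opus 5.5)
I will follow the same pattern as the proof of Theorem~\ref{thm:main}, replacing each ingredient by its $n$-fold analogue. The first step is a generalisation of Lemma~\ref{lem:pullback}: for extensions $f\colon A\to B$, $p\colon E\to B$ with pullback $P=E\times_B A$, we have $\gam[n]{A}{A}(K)=0$ if and only if $\gam[n]{P}{P}(K)=0$. The forward direction of the original proof transfers verbatim, using that $\pi_2$ preserves iterated Huq commutators and $\pi_2(K)=K$. For the converse I will put $M=\gam[n-1]{P}{P}(K)$. This $M$ is normal and $M\le K$, so $[P,M]\le K$. Also $\pi_2([P,M])=\gam[n]{A}{A}(K)=0$, so $[P,M]\le L=\Ker(\pi_2)$, and therefore $[P,M]\le K\wedge L=0$.

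The converse direction of the theorem then goes exactly as in Theorem~\ref{thm:main}. If $f$ is central, it is normal by~\cite{JK94}. Hence $\gam[n]{\Eq(f)}{\Eq(f)}(\Eq(f))\wedge K=0$, and since $\gam[n]{\Eq(f)}{\Eq(f)}(K)$ lies below this meet, it vanishes. The pullback lemma applied to $f$ along itself then gives $\gam[n]{A}{A}(K)=0$.

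For the direct implication I will replace $[\nabla_A,\nabla_A]_{\mathsf{Smith}}$ by the equivalence relation $R$ on $A$ whose normalization is $\gam[n-1]{A}{A}(A)$, so that $Q=A/R$ is $(n-1)$-nilpotent.
\begin{itemize}
\item I first need $[K,\gam[n-1]{A}{A}(A)]=0$. I will prove $[K,\gam[j]{A}{A}(A)]\le\gam[j+1]{A}{A}(K)$ by induction on $j$, using Lemma~\ref{lem:three} with $X=A$, $Y=\gam[j-1]{A}{A}(A)$, $Z=K$. This gives $[[A,Y],K]\le[[Y,K],A]\vee[[K,A],Y]$. The first term is $\le[A,\gam[j]{A}{A}(K)]$ by induction. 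For the second term I need the more general statement $[\gam[i]{A}{A}(K),\gam[j]{A}{A}(A)]\le\gam[i+j+1]{A}{A}(K)$, proved by a double induction in the style of the classical group-theoretic argument $[\gamma_i,\gamma_j]\le\gamma_{i+j}$.
\item By (SH), $\Eq(f)$ and $R$ then centralize each other. This yields the double centralizing relation $C$, the quotient groupoid $E=\Eq(f)/C\rightrightarrows Q$, and the pullback square~\eqref{diag:pb} by the Barr--Kock argument, all unchanged from before.
\item As before, $\gam[n]{E}{E}(K)=\psi(\gam[n]{\Eq(f)}{\Eq(f)}(K))=0$, and $E=K\vee Q$ with $Q$ $(n-1)$-nilpotent; this transfers to $\gam[n-1]{E}{Q}(Q)=0$ computed in $E$, by monotonicity and preservation of commutators along the split mono/epi.
\end{itemize}
Proposition~\ref{prop:n-nil}, announced in the introduction, then gives that $E$ is $n$-nilpotent. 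The final diagram with $u,v$ factoring through the $\Nil_n$-units, together with the regular-pushout argument, shows that $f_1$ is trivial, so $f$ is normal and hence central.

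The main obstacle is Proposition~\ref{prop:n-nil}, if I have to prove it myself. I would expand $\gam[n]{E}{E}(E)$ by distributivity (Proposition~\ref{alg_coh=>Huq_dist}) into joins of bracket words in $K$ and $Q$, nested from inside. Any word containing a $K$ should be bounded by $\gam[n]{E}{E}(K)=0$: once an inner bracket lies in a normal subobject $N$, $[X,N]\le[E,N]$. The word that is purely in $Q$, however, is only bounded by a commutator of $Q$ taken in $E$, not by its normal closure. So I expect to need a careful induction using Lemma~\ref{lem:three}, showing $\gam[k]{E}{E}(E)\le\gam[k]{E}{Q}(Q)\vee\gam[k]{E}{E}(K)$, where Lemma~\ref{lem:three} moves $K$-factors outward. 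The delicate point is the normality of $\gam[k]{E}{Q}(Q)$ in $E$. I would handle it by working with normal closures and re-applying Lemma~\ref{lem:three} to absorb the extra $K$-contributions.
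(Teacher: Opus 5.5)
Apart from one step, your plan is the paper's proof, with harmless variations. Your double induction for $[{}_A\gamma^{i}_{A}(K),{}_A\gamma^{j}_{A}(A)]\le{}_A\gamma^{i+j+1}_{A}(K)$ can be avoided: the paper quantifies the induction in Lemma~\ref{lem:A} over all normal $K$ and applies it to $[K,A]$. Taking $R$ to be the effective relation with normalization ${}_A\gamma^{n-1}_{A}(A)$ is a fine substitute for $\nabla_A^{[n-1]}$. Your inequality ${}_E\gamma^{k}_{E}(E)\le{}_E\gamma^{k}_{Q}(Q)\vee{}_E\gamma^{k}_{E}(K)$ is correct (distributivity plus Lemma~\ref{lem:A}) and slightly sharper than Lemma~\ref{lem:B}.

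The gap is the step the paper handles in Lemma~\ref{lem:C}: passing from ``$Q$ is $(n-1)$-nilpotent as an object'' to the vanishing of iterated commutators of $Q$ computed in $E$. Your third bullet derives ${}_E\gamma^{n-1}_{Q}(Q)=0$ ``by monotonicity and preservation of commutators along the split mono/epi''. But the split epimorphism only gives ${}_E\gamma^{n-1}_{Q}(Q)\le K$, and Huq commutators are not preserved along monomorphisms: $[S,T]_E$ is the normal closure in $E$ of $[S,T]_Q$. For $n\le2$ this does no harm, since an abelian $Q$ has $[Q,Q]_E=0$ via its cooperator. Beyond that it fails. Take $E=\mathbb{F}_3^2\rtimes D_4$, with $D_4\le GL_2(\mathbb{F}_3)$ the signed permutation matrices. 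Then $Q=D_4$ is $2$-nilpotent, yet $[Q,Q]_E$, the normal closure of $\{\pm I\}$, contains $[-I,v]=-2v=v$ for every $v$, hence all of $\mathbb{F}_3^2$. So $[Q,[Q,Q]_E]_E\supseteq[Q,\mathbb{F}_3^2]\neq0$. Any valid argument must therefore use ${}_E\gamma^{n}_{E}(K)=0$, and your closing sketch does not say how. Its worry about the normality of ${}_E\gamma^{k}_{Q}(Q)$ is misplaced: for $k\ge1$ these are Huq commutators in $E$, normal by definition. The real issue is their vanishing.

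The missing ingredient is the normal-closure formula~\eqref{eq:normal-closure} (Mantovani--Metere), $[S,T]_E=[S,T]_Q\vee[[S,T]_Q,E]_E$ for $S,T\le Q\le E$, combined with Lemma~\ref{lem:A}. One proves ${}_E\gamma^{k}_{Q}\bigl({}_Q\gamma^{n-k}_{Q}(Q)\bigr)=0$ for $0\le k\le n$ by induction on $k$:
\begin{itemize}
\item Put $S={}_Q\gamma^{n-k-1}_{Q}(Q)$ and $S'=[Q,S]_Q$. Then $E=K\vee Q$ and distributivity give $[Q,S]_E=S'\vee[Q,S']_E\vee[K,S']_E$.
\item By Lemma~\ref{lem:A}, $[K,S']_E\le[K,{}_E\gamma^{n-k}_{E}(E)]\le{}_E\gamma^{n-k+1}_{E}(K)$.
\item Apply ${}_E\gamma^{k}_{Q}$. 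The first two terms vanish by the induction hypothesis, and the third lands in ${}_E\gamma^{n+1}_{E}(K)=0$.
\end{itemize}
At $k=n$ this gives ${}_E\gamma^{n}_{Q}(Q)=0$, and your inequality then yields ${}_E\gamma^{n}_{E}(E)=0$. Shifting the same induction by one even recovers your claim ${}_E\gamma^{n-1}_{Q}(Q)=0$, but only because the hypothesis on $K$ is used.
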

For $n = 1$ this is the classical characterization of $\Ab(\C)$-central extensions recalled in Section~\ref{ssec:huq}, and for $n=2$ it is Theorem~\ref{thm:main}.
We defer the proof until after we establish some preliminary results of a commutator-theoretic nature. The first one generalizes the instance~\eqref{eq:three-instance} of the Three Subobjects Lemma.
\begin{lemma}\label{lem:A}
For a normal subobject $K$ of an object $A$ in a semi-abelian, algebraically coherent category,
and for $n$ a non-negative integer,
\[
[K,\gam[n]{A}{A}(A)] \leq \gam[n+1]{A}{A}(K).
\]
\end{lemma}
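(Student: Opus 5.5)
The proof goes by induction on $n$, with the statement quantified over \emph{all} normal subobjects $K$ of $A$ simultaneously. This stronger form is needed because the inductive step applies the induction hypothesis not only to $K$ but also to the normal subobject $[K,A]$.

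For the base case $n=0$, the claim reads $[K,A] \leq [A,K] = \gam[1]{A}{A}(K)$, which holds with equality by symmetry of the Huq commutator. (The case $n=1$ is exactly the instance~\eqref{eq:three-instance}, and the inductive step below reproduces that argument.)

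For the inductive step, assume that $[K',\gam[n]{A}{A}(A)] \leq \gam[n+1]{A}{A}(K')$ for every normal subobject $K'$ of $A$. We have $\gam[n+1]{A}{A}(A) = [A,\gam[n]{A}{A}(A)]$, and all of $A$, $K$ and $\gam[n]{A}{A}(A)$ are normal in $A$. So we may apply the Three Subobjects Lemma (Lemma~\ref{lem:three}) with $X = A$, $Y = \gam[n]{A}{A}(A)$ and $Z = K$. Using symmetry, this gives
\[
[K,\gam[n+1]{A}{A}(A)] = [[A,\gam[n]{A}{A}(A)],K] \leq \bigl[A,[K,\gam[n]{A}{A}(A)]\bigr] \vee \bigl[[K,A],\gam[n]{A}{A}(A)\bigr].
\]
We bound the two joinands separately.

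For the first joinand, the induction hypothesis with $K' = K$ gives $[K,\gam[n]{A}{A}(A)] \leq \gam[n+1]{A}{A}(K)$. Monotonicity of the commutator then yields
\[
\bigl[A,[K,\gam[n]{A}{A}(A)]\bigr] \leq [A,\gam[n+1]{A}{A}(K)] = \gam[n+2]{A}{A}(K).
\]

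For the second joinand, we use that $K' = [K,A]$ is again a normal subobject of $A$, since Huq commutators are normal. The induction hypothesis applied to this $K'$ gives
\[
\bigl[[K,A],\gam[n]{A}{A}(A)\bigr] \leq \gam[n+1]{A}{A}([A,K]) = \gam[n+2]{A}{A}(K),
\]
where the final equality is just the definition of the iterated commutator (up to symmetry).

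Combining the two bounds gives $[K,\gam[n+1]{A}{A}(A)] \leq \gam[n+2]{A}{A}(K)$, which completes the induction.

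The only delicate point is the second joinand. It is not controlled by the induction hypothesis for the fixed $K$, which is why the induction must range over all normal subobjects. One also has to check the normality hypotheses of Lemma~\ref{lem:three} at each stage; these hold because all iterated commutators of normal subobjects are normal.
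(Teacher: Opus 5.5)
Your proof is correct and is essentially the paper's own argument. Both induct over all normal subobjects $K$ at once, apply the Three Subobjects Lemma to $A$, the $n$-th lower central term of $A$, and $K$, and bound the two resulting joinands by the induction hypothesis for $K$ and for $[K,A]$.
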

\begin{proof}
We prove the claim by induction on $n$, for all normal subobjects $K$ of $A$. For $n=0$ there is nothing to prove since
\[[K,\gam[0]{A}{A}(A)]= [K,A] = [A,K] = \gam[1]{A}{A}(K).\]  Suppose the claim
holds for some $n$ and for every normal subobject of $A$. Applying the Three Subobjects Lemma~\ref{lem:three} to the normal subobjects $A$, $\gam[n]{A}{A}(A)$ and $K$ of $A$, and then the induction hypothesis to the normal subobjects $K$ and $[K,A]$, we get
\begin{align*}
[K,\gam[n+1]{A}{A}(A)] &= [K,[A,\gam[n]{A}{A}(A)]]\\
&\leq [A,[K,\gam[n]{A}{A}(A)]] \vee [\gam[n]{A}{A}(A),[K,A]]\\
&\leq [A,\gam[n+1]{A}{A}(K)] \vee \gam[n+1]{A}{A}([K,A])\\
&= \gam[n+2]{A}{A}(K).
\end{align*}
This proves the claim for $n+1$.
\end{proof}
\begin{lemma}\label{lem:B}
For subobjects $K$ and $B$ of an object $A$ in a semi-abelian algebraically coherent category,
 with $K$ normal in $A$ and $K\vee B=A$, and for $n$ a positive integer,
\[
\gam[n]{A}{A}(A) \leq \gam[n]{A}{A}(K) \vee [A,\gam[n-1]{A}{B}(B)].
\]
\end{lemma}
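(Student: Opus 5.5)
The plan is induction on $n$, using only Proposition~\ref{alg_coh=>Huq_dist}, monotonicity of the Huq commutator, and Lemma~\ref{lem:A}. Here $\gam[n-1]{A}{B}(B)$ is the $(n-1)$-fold iterated commutator of $B$ with itself, computed in $A$. It need not be normal in $A$. This causes no difficulty: distributivity in Proposition~\ref{alg_coh=>Huq_dist} holds for arbitrary subobjects, and monotonicity needs no normality.

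For $n=1$ we have $\gam[0]{A}{B}(B)=B$, so the claim reads $[A,A]\leq [A,K]\vee[A,B]$. Writing $A=K\vee B$ in the second variable and applying Proposition~\ref{alg_coh=>Huq_dist} gives $[A,A]=[A,K]\vee[A,B]$, which is even an equality.

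For the inductive step, assume the claim for $n$ and put $X=\gam[n-1]{A}{B}(B)$, so that $\gam[n]{A}{B}(B)=[B,X]$. By the induction hypothesis, monotonicity and distributivity,
\[
\gam[n+1]{A}{A}(A)=[A,\gam[n]{A}{A}(A)]\leq [A,\gam[n]{A}{A}(K)\vee[A,X]] = \gam[n+1]{A}{A}(K)\vee[A,[A,X]].
\]
It therefore suffices to show $[A,[A,X]]\leq \gam[n+1]{A}{A}(K)\vee[A,[B,X]]$.

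To do this, expand the inner $A$ as $K\vee B$. Distributivity, applied twice, gives
\[
[A,[A,X]]=[A,[K,X]\vee[B,X]]=[A,[K,X]]\vee[A,[B,X]].
\]
The second term is already the one we want. For the first term, note that $X\leq \gam[n-1]{A}{A}(A)$ by monotonicity, since $B\leq A$. Lemma~\ref{lem:A}, applied with $n-1\geq 0$, then gives $[K,X]\leq[K,\gam[n-1]{A}{A}(A)]\leq \gam[n]{A}{A}(K)$. Hence $[A,[K,X]]\leq \gam[n+1]{A}{A}(K)$, which completes the induction.

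The only substantive step is this bound on the cross term $[A,[K,X]]$. That is exactly where Lemma~\ref{lem:A}, and through it the Three Subobjects Lemma, is needed; everything else is bookkeeping with distributivity and monotonicity.
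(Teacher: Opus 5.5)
Your proposal is correct and follows essentially the same route as the paper: induction on $n$, the same base case via distributivity over $A=K\vee B$, and the same inductive step. In that step you expand the inner $A$ as $K\vee B$, and you bound the cross term $[A,[K,\gam[n-1]{A}{B}(B)]]$ via $\gam[n-1]{A}{B}(B)\leq\gam[n-1]{A}{A}(A)$ and Lemma~\ref{lem:A}.
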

\begin{proof}
We prove the claim by induction on $n$. For $n=1$ we have
\[\gam[1]{A}{A}(A)= [A,A] = [A,K\vee B]= [A,K] \vee [A,B] = \gam[1]{A}{A}(K) \vee [A,\gam[0]{A}{B}(B)].\]  Suppose the claim
holds for some $n$. Using the distributivity of the Huq commutator over joins, its monotonicity (which gives $\gam[n-1]{A}{B}(B) \leq \gam[n-1]{A}{A}(A)$, since $B \leq A$) and Lemma~\ref{lem:A}, we obtain
\begin{align*}
\gam[n+1]{A}{A}(A)&= [A,\gam[n]{A}{A}(A)]\\
&\leq [A,\gam[n]{A}{A}(K) \vee [A,\gam[n-1]{A}{B}(B)]]\\
&= [A,\gam[n]{A}{A}(K)] \vee [A,[A,\gam[n-1]{A}{B}(B)]]\\
&= \gam[n+1]{A}{A}(K) \vee [A,[K\vee B,\gam[n-1]{A}{B}(B)]]\\
&= \gam[n+1]{A}{A}(K) \vee [A,[K,\gam[n-1]{A}{B}(B)]]\vee [A,[B,\gam[n-1]{A}{B}(B)]]\\
&\leq \gam[n+1]{A}{A}(K) \vee [A,[K,\gam[n-1]{A}{A}(A)]]\vee [A,\gam[n]{A}{B}(B)]\\
&\leq \gam[n+1]{A}{A}(K) \vee [A,\gam[n]{A}{A}(K)]\vee [A,\gam[n]{A}{B}(B)]\\
&= \gam[n+1]{A}{A}(K) \vee [A,\gam[n]{A}{B}(B)].
\end{align*}
This proves the claim for $n+1$.
\end{proof}
Before stating the next lemma, let us recall how Huq commutators behave when the ``ambient object'' changes. Let $X$ be a subobject of an object $Y$ in a semi-abelian category, and let $S$ and $T$ be subobjects of $X$.
Denoting by $[S,T]_X$ and $[S,T]_Y$ the Huq commutators of $S$ and $T$ in $X$ and in $Y$ respectively, it
turns out that $[S,T]_Y$ is the normal closure of $[S,T]_X$ in $Y$, which can
be computed as 
\begin{equation}\label{eq:normal-closure}
[S,T]_Y= [S,T]_X \vee [[S,T]_X,Y]_Y .
\end{equation}
Indeed, on the one hand $[S,T]_X \leq [S,T]_Y$: if $N$ is a normal subobject of $Y$
such that the images of $S$ and $T$ in $Y/N$ cooperate, the cooperator
$S \times T \to Y/N$ factors through the image of $X$ in $Y/N$, because the pair
$\langle 1,0\rangle, \langle 0,1\rangle$ is jointly strongly epimorphic and both
$S$ and $T$ factor through $X$; hence $S$ and $T$ cooperate in $X/(X \wedge N)$,
so that $[S,T]_X \leq X \wedge N$. Applying this to $N = [S,T]_Y$ gives the
inequality, and the same argument run backwards shows that $[S,T]_Y$ is the
smallest normal subobject of $Y$ containing $[S,T]_X$, that is, its normal
closure. On the other hand, by a result of Mantovani and
Metere~\cite{MM10}, the normal closure of a
subobject $Z$ of $Y$ is $Z \vee [Z,Y]_Y$.\footnote{In \cite{MM10}
this formula is stated for the Higgins commutator, a (not necessarily normal)
subobject of $Y$ whose normal closure is the Huq commutator $[Z,Y]_Y$. Since the
join of $Z$ with the Higgins commutator is normal and contains the latter, it
contains $[Z,Y]_Y$; hence the formula holds equally with the Huq commutator.}
In particular, for subobjects $S \leq X \leq Y$ one has $[S,S]_Y = 0$ whenever $[S,S]_X = 0$.
\begin{lemma}\label{lem:C}
Let $K$ and $B$ be subobjects of an object $A$ in a semi-abelian algebraically
coherent category, with $K$ normal in $A$ and $K \vee B = A$, and let $n \geq 1$.
If
\[\gam[n]{A}{A}(K) \;=\; 0 \]
and $B$ is $(n-1)$-nilpotent, then
\[\bigl[A,\, \gam[n-1]{A}{B}(B)\bigr] \;=\; 0. \]
\end{lemma}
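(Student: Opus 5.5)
The plan is to prove the stronger statement $\gam[n-1]{A}{B}(B)=0$, which clearly implies $\bigl[A,\gam[n-1]{A}{B}(B)\bigr]=0$. The argument compares the iterated commutators of $B$ computed in $A$ with those computed in $B$, using the normal closure formula~\eqref{eq:normal-closure}.

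Notation: put $D_k=\gam[k]{A}{B}(B)$, computed in $A$, and $D'_k=\gam[k]{B}{B}(B)$, computed in $B$. The hypothesis that $B$ is $(n-1)$-nilpotent says $D'_{n-1}=0$, hence $D'_j=0$ for all $j\geq n-1$. Write $N(Z)$ for the normal closure in $A$ of a subobject $Z\leq A$. By \eqref{eq:normal-closure} and Mantovani--Metere, $N(Z)=Z\vee[Z,A]_A$. Two elementary facts will be used:
\begin{itemize}
\item[(i)] $D'_k\leq\gam[k]{A}{A}(A)$, by monotonicity and since commutators computed in $B$ lie below those computed in $A$;
\item[(ii)] $[S,D'_k]_A=N([S,D'_k]_B)$ for $S\leq B$, again by \eqref{eq:normal-closure}, so in particular $[B,D'_k]_A=N(D'_{k+1})$.
\end{itemize}

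\emph{Step 1 (descending induction on $m$).} I will show that
\[
N(D'_m)\;\leq\;\bigvee_{j\geq m}D'_j\;\vee\;\gam[m+1]{A}{A}(K).
\]
For $m\geq n-1$ the left side is $0$. For the inductive step, use distributivity (Proposition~\ref{alg_coh=>Huq_dist}) and $A=K\vee B$:
\[
N(D'_m)=D'_m\vee[D'_m,K]\vee[D'_m,B]_A=D'_m\vee[D'_m,K]\vee N(D'_{m+1}).
\]
By (i) and Lemma~\ref{lem:A}, $[D'_m,K]\leq[K,\gam[m]{A}{A}(A)]\leq\gam[m+1]{A}{A}(K)$. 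The term $N(D'_{m+1})$ is bounded by the induction hypothesis. This closes the induction, using that $\gam[j]{A}{A}(K)$ is decreasing in $j$; the latter holds because these are normal subobjects and $[A,T]\leq T$ for $T$ normal.

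\emph{Step 2 (ascending induction on $k$).} I will show that
\[
D_k\;\leq\;\bigvee_{j\geq k}D'_j\;\vee\;\gam[k+1]{A}{A}(K).
\]
For $k=0$ this holds since $D_0=B=D'_0$. For the inductive step, write $D_k=[B,D_{k-1}]_A$ and distribute over the join given by the induction hypothesis. Each term $[B,D'_j]_A$ equals $N(D'_{j+1})$ by (ii), and is handled by Step 1. The remaining term satisfies $[B,\gam[k]{A}{A}(K)]\leq\gam[k+1]{A}{A}(K)$ by monotonicity.

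\emph{Conclusion.} Taking $k=n-1$ in Step 2, all $D'_j$ with $j\geq n-1$ vanish and $\gam[n]{A}{A}(K)=0$ by hypothesis. Hence $D_{n-1}=0$, and a fortiori $[A,D_{n-1}]=0$.

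The main obstacle is the bookkeeping of ambient objects. One must make sure that commutators of subobjects of $B$ taken in $A$ are exactly normal closures of those taken in $B$, which is fact (ii). The dangerous point is the term $[D'_m,B]_A$: if it is treated carelessly, the argument loops instead of shifting the index $m$ upward, which is what lets the descending induction terminate.
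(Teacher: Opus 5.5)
Your argument is correct, and it proves more than the lemma asks: it gives the vanishing $\gam[n-1]{A}{B}(B)=0$ itself, not only $[A,\gam[n-1]{A}{B}(B)]=0$. It uses the same ingredients as the paper: the normal-closure formula \eqref{eq:normal-closure}, distributivity over $A=K\vee B$, Lemma~\ref{lem:A} and monotonicity. The difference is in how they are organized.

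\textbf{The paper's route.} It runs a single induction on $k$ proving $\gam[k]{A}{B}\bigl(\gam[n-k]{B}{B}(B)\bigr)=0$, starting from $\gam[n]{B}{B}(B)=0$. At each step it expands $[B,S]_A=S'\vee[B,S']_A\vee[K,S']_A$ and immediately kills the $K$-contribution using Lemma~\ref{lem:A} and the hypothesis. The induction ends with $\gam[n]{A}{B}(B)=0$. One more use of $A=K\vee B$ and Lemma~\ref{lem:A} then yields $[A,\gam[n-1]{A}{B}(B)]=0$.

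\textbf{Your route.} You first prove explicit upper bounds for $N(D'_m)$ and for $D_k$ in terms of the $D'_j$ and $\gam[k+1]{A}{A}(K)$. The hypothesis $\gam[n]{A}{A}(K)=0$ plays no role in these bounds; only the nilpotency of $B$ is used, to anchor the descending induction. The vanishing hypothesis is invoked just once, at the very end.

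\textbf{What each buys.} Your version yields a reusable comparison between the iterated commutators of $B$ computed in $A$ and in $B$, namely
\[
\gam[k]{A}{B}(B)\leq\gam[k]{B}{B}(B)\vee\gam[k+1]{A}{A}(K).
\]
Your join over $j\geq k$ collapses to $D'_k$, since $D'_{j+1}=[B,D'_j]_B\leq D'_j$. The paper's version is shorter.

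The paper's induction would also give your stronger conclusion if it were started from $\gam[n-1]{B}{B}(B)=0$ instead of $\gam[n]{B}{B}(B)=0$. The third term is then bounded by $\gam[n]{A}{A}(K)$ rather than $\gam[n+1]{A}{A}(K)$. So the extra final step in the paper only reflects where its induction begins.
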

\begin{proof}
We first show, by induction on $k \in \{0,\dots,n\}$, that
\begin{equation}\label{eq:lemC-ind}
\gam[k]{A}{B}\bigl(\gam[n-k]{B}{B}(B)\bigr) \;=\; 0 .
\end{equation}
For $k=0$ this is the equality $\gam[n]{B}{B}(B) = 0$, which holds because
$\gam[n]{B}{B}(B) = [B,\gam[n-1]{B}{B}(B)]_B \leq \gam[n-1]{B}{B}(B) = 0$, the
object $B$ being $(n-1)$-nilpotent. Suppose now that \eqref{eq:lemC-ind} holds
for some $k \leq n-1$, and write $S = \gam[n-k-1]{B}{B}(B)$ and
$S' = [B,S]_B = \gam[n-k]{B}{B}(B)$, so that \eqref{eq:lemC-ind} reads
$\gam[k]{A}{B}(S') = 0$. By the description \eqref{eq:normal-closure} of $[B,S]_A$ as the normal closure of
$[B,S]_B$ recalled above, together with $A = K \vee B$ and the distributivity of
the Huq commutator over joins,
\begin{align*}
[B,S]_A &= S' \vee  [A,S']_A
        \;=\; S' \vee  [B,S']_A \vee  [K,S']_A .
\end{align*}
Since $S' = \gam[n-k]{B}{B}(B) \leq \gam[n-k]{A}{A}(A)$ (by \eqref{eq:normal-closure} and monotonicity), Lemma~\ref{lem:A} gives
$[K,S']_A \leq [K,\gam[n-k]{A}{A}(A)] \leq \gam[n-k+1]{A}{A}(K)$, whence
\[
[B,S]_A \;\leq\; S' \vee  [B,S']_A \vee  \gam[n-k+1]{A}{A}(K).
\]
Applying $\gam[k]{A}{B}$, which is monotone and distributes over joins, we obtain
\begin{align*}
\gam[k+1]{A}{B}(S) &= \gam[k]{A}{B}\bigl([B,S]_A\bigr)\\
&\leq \gam[k]{A}{B}(S') \vee \gam[k+1]{A}{B}(S')\vee  \gam[k]{A}{B}\bigl(\gam[n-k+1]{A}{A}(K)\bigr).
\end{align*}
The first term vanishes by the induction hypothesis, and so does the second,
since $\gam[k+1]{A}{B}(S') = [B, \gam[k]{A}{B}(S')]_A = [B,0]_A = 0$. As for the
third, $B \leq A$ and the monotonicity of the commutator give
$\gam[k]{A}{B}(N) \leq \gam[k]{A}{A}(N)$ for every subobject $N$
of $A$; with $N = \gam[n-k+1]{A}{A}(K)$ this yields
$\gam[k]{A}{B}\bigl(\gam[n-k+1]{A}{A}(K)\bigr) \leq
\gam[k]{A}{A}\bigl(\gam[n-k+1]{A}{A}(K)\bigr) = \gam[n+1]{A}{A}(K) \leq
\gam[n]{A}{A}(K) = 0$. Hence $\gam[k+1]{A}{B}(S) = 0$, which is
\eqref{eq:lemC-ind} for $k+1$.

Taking $k = n$ in \eqref{eq:lemC-ind} we get $\gam[n]{A}{B}(B) = 0$, that is,
$[B,T]_A = 0$ for $T = \gam[n-1]{A}{B}(B)$. Finally, since $A = K \vee B$,
\[
[A,T]_A \;=\; [K,T]_A \vee [B,T]_A \;=\; [K,T]_A
\;\leq\; [K,\gam[n-1]{A}{A}(A)]_A \;\leq\; \gam[n]{A}{A}(K) \;=\; 0 ,
\]
where we used $T \leq \gam[n-1]{A}{A}(A)$ and Lemma~\ref{lem:A} once more.
\end{proof}
\begin{proposition}\label{prop:n-nil}
Let $K$ and $B$ be subobjects of an object $A$ in a semi-abelian algebraically coherent category,
 with $K$ normal in $A$ and $K\vee B=A$, and let $n \geq 1$. If
\[\gam[n]{A}{A}(K) \;=\; 0 \]
and $B$ is $(n-1)$-nilpotent, then $A$ is $n$-nilpotent.
\end{proposition}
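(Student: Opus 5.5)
The plan is to combine Lemma~\ref{lem:B} and Lemma~\ref{lem:C}, which were set up exactly for this purpose. By Lemma~\ref{lem:B}, applied to the normal subobject $K$ and the subobject $B$ with $K \vee B = A$, we have
\[
\gam[n]{A}{A}(A) \;\leq\; \gam[n]{A}{A}(K) \vee \bigl[A,\gam[n-1]{A}{B}(B)\bigr].
\]
The first term of the join vanishes by hypothesis. The second term vanishes by Lemma~\ref{lem:C}, whose hypotheses are precisely ours: $K$ is normal, $K\vee B = A$, $\gam[n]{A}{A}(K)=0$, and $B$ is $(n-1)$-nilpotent. Hence $\gam[n]{A}{A}(A) \leq 0 \vee 0 = 0$, which is the statement that $A$ is $n$-nilpotent.

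The substantive work therefore sits in the two lemmas, already proved above; the proposition is their immediate consequence. The only points to verify are routine. Lemma~\ref{lem:B} is stated for $n$ a positive integer, which matches $n\ge 1$. For $n=1$, the hypothesis says $B$ is $0$-nilpotent, i.e.\ $B=0$, so $A = K$ and $[A,K]=0$ directly gives $[A,A]=0$. This case is also covered uniformly, since $\gam[0]{A}{B}(B)=B$ and Lemma~\ref{lem:C} then yields $[A,B]=0$. There is no real obstacle beyond checking that the iterated commutator notation in both lemmas refers to the same ambient object $A$, which it does.
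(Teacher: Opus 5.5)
Your proposal is correct and is exactly the paper's proof: Lemma~\ref{lem:B} bounds the $n$-th term of the lower central series of $A$ by the join of the hypothesis term and $\bigl[A,\gam[n-1]{A}{B}(B)\bigr]$, and Lemma~\ref{lem:C} shows that the second term vanishes. Your separate check of the case $n=1$ is also correct, though not needed.
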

\begin{proof}
By Lemmas~\ref{lem:B} and~\ref{lem:C} we have
\[\gam[n]{A}{A}(A) \leq \gam[n]{A}{A}(K) \vee [A,\gam[n-1]{A}{B}(B)]=0.\qedhere\]
\end{proof}

\begin{remark}
For $n=2$, Proposition~\ref{prop:n-nil} says that if $A = K \vee B$ with $K$ normal, $B$ abelian and $[A,[A,K]] = 0$, then $A$ is $2$-nilpotent: this is exactly the computation carried out in the proof of Theorem~\ref{thm:main}. For $n = 1$ the hypothesis on $B$ means $B = 0$, so that $A = K$ and the statement is trivial. In the category of groups, Proposition~\ref{prop:n-nil} is an easy consequence of the Three Subgroups Lemma; its validity in the present categorical setting is what allows the inductive argument on $n$.
\end{remark}

The following lemma extends Lemma~\ref{lem:pullback}; as in that case, no hypothesis beyond semi-abelianness is needed.
\begin{lemma}\label{lem:pullback-n}
Let $f \colon A \to B$ and $p \colon E \to B$ be extensions in a semi-abelian category $\C$, let $n \geq 1$, and
consider the pullback of $f$ along $p$:
\[
\begin{DIFnomarkup}\begin{tikzcd}[ampersand replacement=\&, column sep=large, row sep=large]
P \arrow[r, "{\pi_2}"] \arrow[d, "{\pi_1}"'] \& A \arrow[d, "{f}"] \\
E \arrow[r, "{p}"'] \& B .
\end{tikzcd}\end{DIFnomarkup}
\]
Write $K = \Ker(f)$ and identify, as usual, $\Ker(\pi_1)$ with $K$.
Then
\[
  \gam[n]{A}{A}(K) = 0
  \qquad \Longleftrightarrow \qquad
  \gam[n]{P}{P}(K) = 0 .
\]
\end{lemma}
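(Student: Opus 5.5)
I will follow the proof of Lemma~\ref{lem:pullback} closely, replacing the single commutator $[K,-]$ by the iterate $\gam[n]{P}{P}(K)$. As there, set $L = \Ker(\pi_2)$. Since $p$ is a regular epimorphism, so is its pullback $\pi_2$. The restriction of $\pi_2$ to $\Ker(\pi_1)=K$ is the inclusion $K\to A$, so $\pi_2(K)=K$. Moreover $K\wedge L = \Ker(\pi_1)\wedge\Ker(\pi_2)=0$, because $\pi_1,\pi_2$ are jointly monomorphic.

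\emph{Direction ($\Leftarrow$).} Direct images along a regular epimorphism preserve Huq commutators, so an induction on $k$ gives
\[
\pi_2\bigl(\gam[k]{P}{P}(K)\bigr)=\gam[k]{A}{A}(K)\qquad\text{for all } k\geq 0.
\]
The inductive step is $\pi_2([P,T])=[\pi_2(P),\pi_2(T)]=[A,\pi_2(T)]$. Hence if $\gam[n]{P}{P}(K)=0$, then $\gam[n]{A}{A}(K)=\pi_2(0)=0$.

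\emph{Direction ($\Rightarrow$).} Put $M=\gam[n]{P}{P}(K)$.
\begin{itemize}
\item $M\le K$: the iterated commutators of normal subobjects are normal, and each is contained in the meet of its arguments. So by induction $\gam[k]{P}{P}(K)\le K$ for every $k$, and in particular $M\le K$.
\item $M\le L$: by the displayed identity, $\pi_2(M)=\gam[n]{A}{A}(K)=0$, so $M\le \Ker(\pi_2)=L$.
\end{itemize}
Therefore $M\le K\wedge L=0$.

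The only point needing care is this final step: one must use that the direct image of $M$ is zero to conclude $M\le\Ker(\pi_2)$. This holds because $\pi_2(M)$ is the regular image of $M$, and that image is $0$ exactly when $M$ factors through the kernel. No algebraic coherence is used anywhere, consistent with the remark preceding the lemma. For $n=1$ the argument gives the classical stability of $[K,-]=0$, and for $n=2$ it recovers Lemma~\ref{lem:pullback}.
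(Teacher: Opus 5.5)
Your proposal is correct and follows the paper's proof essentially step for step. For ($\Leftarrow$) you use the same induction showing that $\pi_2$ carries $[P,[P,\dots,[P,K]\dots]]$ onto $[A,[A,\dots,[A,K]\dots]]$. For ($\Rightarrow$) you use the same squeeze of that iterated commutator in $P$ below $K \wedge \Ker(\pi_2) = 0$.
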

\begin{proof}
Write $L = \Ker(\pi_2)$, so that $L$ is
canonically isomorphic to $\Ker(p)$. Since $p$ is a regular epimorphism,
so is its pullback $\pi_2$; and the restriction of $\pi_2$ to
$\Ker(\pi_1)$ is precisely the inclusion $K \to A$, whence
$\pi_2(K) = K$. Moreover $K \wedge L = 0$,  since
$K \wedge L = \Ker(\pi_1) \wedge \Ker(\pi_2)$.

Since regular epimorphisms preserve Huq commutators, an immediate induction on $n$ shows that
\[
  \pi_2\bigl(\gam[n]{P}{P}(K)\bigr)
    \;=\; \gam[n]{\pi_2(P)}{\pi_2(P)}(\pi_2(K))
    \;=\; \gam[n]{A}{A}(K) .
\]
In particular, $\gam[n]{P}{P}(K)=0$ implies $\gam[n]{A}{A}(K) = 0$.

Conversely, suppose that $\gam[n]{A}{A}(K) = 0$.
Since $\gam[n]{P}{P}(K) = [P,\gam[n-1]{P}{P}(K)] \leq P \wedge \gam[n-1]{P}{P}(K) = \gam[n-1]{P}{P}(K)$, an induction on $n$ shows that $\gam[n]{P}{P}(K) \leq K$.
On the other hand, by the displayed equality above,
\[
\pi_2\bigl(\gam[n]{P}{P}(K)\bigr)
    \;= \gam[n]{A}{A}(K) \;=\; 0,
\]
so that $\gam[n]{P}{P}(K) \leq \Ker(\pi_2) = L$. Therefore
$\gam[n]{P}{P}(K) \leq K \wedge L = 0$, as desired.
\end{proof}

We shall also need the Smith--Pedicchio analogue of the iterated Huq
commutators $\gam[k]{A}{A}(A)$. For an object $A$ and $k \geq 0$ we define
inductively the equivalence relations $\nabk{k}$ on $A$ by
\[
  \nabk{0} \;=\; \nabla_A
  \qquad\text{and}\qquad
  \nabk{k+1} \;=\; \bigl[\nabla_A,\, \nabk{k}\bigr]_{\mathsf{Smith}} ,
\]
so that
$\nabk{k} = [\nabla_A,[\nabla_A,\dots,[\nabla_A,\nabla_A]_{\mathsf{Smith}}\dots]_{\mathsf{Smith}}]_{\mathsf{Smith}}$
with $k$ nested Smith commutators; thus $\nabk{1} = [\nabla_A,\nabla_A]_{\mathsf{Smith}}$
is the relation used in the proof of Theorem~\ref{thm:main}. Under the condition (SH)
the normalization of a Smith commutator is the Huq commutator of the
normalizations, $I_{[R,S]_{\mathsf{Smith}}} = [I_R,I_S]$ (see Section~\ref{ssec:huq}); since
$I_{\nabla_A} = A$, an immediate induction on $k$ gives
\[
  I_{\nabk{k}} \;=\; \gam[k]{A}{A}(A)
  \qquad (k \geq 0),
\]
and consequently $A/\nabk{k} \cong A/\gam[k]{A}{A}(A)$. In particular, in an
algebraically coherent semi-abelian category the normalization of $\nabk{n-1}$
is $\gam[n-1]{A}{A}(A)$, the kernel of the unit $\eta_A \colon A \to A/\gam[n-1]{A}{A}(A)$
of the reflection onto $\Nil_{n-1}(\C)$.

With the above results in place, our proof for the $2$-nilpotent case goes through essentially verbatim.

\begin{proof}[Proof of Theorem~\ref{thm:n-main}]
Assume that $\gam[n]{A}{A}(K) = 0$. By Lemma \ref{lem:A} (applied with $n-1$ in place of $n$) we obtain
\[
  [K,\gam[n-1]{A}{A}(A)] \;=\; 0 .
\]
Now the normalization of the equivalence relation $\Eq(f)$ on $A$ is $K$,
and the normalization of $\nabk{n-1}$ is $\gam[n-1]{A}{A}(A)$; the condition
(SH), which holds since $\C$ is algebraically coherent, therefore turns
the vanishing of the Huq commutator $[K,\gam[n-1]{A}{A}(A)]$ into
the vanishing of the Smith commutator
\[
  \bigl[\,\Eq(f),\; \nabk{n-1}\,\bigr]_{\mathsf{Smith}} \;=\; \Delta_A .
\]
Accordingly, the equivalence relations $\Eq(f)$ and $\nabk{n-1}$
centralize each other, and there is a unique double centralizing
equivalence relation $C$ on them. Taking the quotients in the vertical
direction, we obtain the diagram
\begin{equation}\label{diag:key2}
\begin{DIFnomarkup}\begin{tikzcd}[ampersand replacement=\&, column sep=huge, row sep=large]
C
  \arrow[r, shift left=0.9ex, "{p_1}"] \arrow[r, shift right=0.9ex, "{p_2}"']
  \arrow[d, shift left=0.9ex, "{c_1}"] \arrow[d, shift right=0.9ex, "{c_2}"']
\& {\nabk{n-1}}
  \arrow[d, shift left=0.9ex, "{d_1}"] \arrow[d, shift right=0.9ex, "{d_2}"'] \\
\Eq(f)
  \arrow[r, shift left=0.9ex, "{f_1}"] \arrow[r, shift right=0.9ex, "{f_2}"']
  \arrow[d, two heads, "{\psi}"']
\& A
  \arrow[d, two heads, "{\varphi}"] \\
\Eq(f)/C
  \arrow[r, shift left=0.9ex, "{\bar{f}_1}"]
  \arrow[r, shift right=0.9ex, "{\bar{f}_2}"']
\& A/\nabk{n-1}
\end{tikzcd}\end{DIFnomarkup}
\end{equation}
which is obtained in perfect analogy with the construction of diagram \eqref{diag:key}.
The
morphisms $\psi$ and $\varphi$ are the quotients of the two vertical
equivalence relations, the latter having codomain
$A/\nabk{n-1} \cong A/\gam[n-1]{A}{A}(A)$. To lighten the
notation we shall write
\[
  E \;=\; \Eq(f)/C \qquad\text{and}\qquad Q \;=\; A/\nabk{n-1}
\]
for the quotients appearing in the lower left and right corner
of~\eqref{diag:key2}, respectively.

As in the proof of Theorem \ref{thm:main}, the lower part of~\eqref{diag:key2} is then a discrete fibration of internal
groupoids
\[
  \bigl(\Eq(f) \rightrightarrows A\bigr)
  \longrightarrow
  \bigl(E \rightrightarrows Q\bigr) .
\]
The next step is to show that $E$ is $n$-nilpotent. Since $\Eq(f)$ is
obtained by pulling back $f$ along itself, Lemma \ref{lem:pullback-n} tells us that
$\gam[n]{\Eq(f)}{\Eq(f)}(K)=0$. Since each of the lower commutative squares of the diagram \eqref{diag:key2} is a pullback, the kernel of $\bar{f}_1$ is $K$ and $\psi(K)=K$; as $\psi$ is a regular epimorphism, it follows that $\gam[n]{E}{E}(K)= \psi\bigl(\gam[n]{\Eq(f)}{\Eq(f)}(K)\bigr)=0$. Since $\overline{f}_1$ is a split epimorphism,
we can view $Q$ as a subobject of $E$, and then $E=K\vee Q$. Moreover, $Q \cong A/\gam[n-1]{A}{A}(A)$ is $(n-1)$-nilpotent, since
\[
  \gam[n-1]{Q}{Q}(Q) \;=\; \varphi\bigl(\gam[n-1]{A}{A}(A)\bigr) \;=\; 0 .
\]
Proposition \ref{prop:n-nil} now tells us that $E$ is $n$-nilpotent.
Writing $I \colon \C \rightarrow \Nil_{n}(\C)$ for the reflector onto the subcategory of $n$-nilpotent objects
and $\eta$ for its unit, via the universal property of the reflector there are unique morphisms $u$ and $v$ making the diagram
\[
\begin{tikzcd}[ampersand replacement=\&, column sep=huge, row sep=large]
\Eq(f) \arrow[bend left=20,rr, two heads, "{\psi}"] \arrow[r, two heads, "{\eta_{\Eq(f)}}"] \arrow[d, "{f_1}"'] \& I(\Eq(f)) \arrow[r, two heads, "{u}"] \arrow[d, "{I(f_1)}"'] \& E
  \arrow[d, "{\bar{f}_1}"] \\
A \arrow[bend right=20,rr, two heads, "{\varphi}"']\arrow[r, two heads, "{\eta_{A}}"'] \& I(A)\arrow[r, two heads, "{v}"'] \& Q
\end{tikzcd}
\]
commute. As in the proof of Theorem \ref{thm:main}, from this it follows that the left hand square is a pullback, so that $f_1$ is a trivial extension and $f$ is a normal, hence central, extension.

Conversely, suppose that $f$ is central, hence normal. Then $\eta_{\Eq(f)}$ and $f_1$ are jointly monomorphic, so that $\Ker(\eta_{\Eq(f)}) \wedge K = 0$, where now $$\Ker(\eta_{\Eq(f)}) = \gam[n]{\Eq(f)}{\Eq(f)}(\Eq(f)).$$ Since $\gam[n]{\Eq(f)}{\Eq(f)}(K) \leq \gam[n]{\Eq(f)}{\Eq(f)}(\Eq(f)) \wedge K$ by monotonicity and normality, we get $\gam[n]{\Eq(f)}{\Eq(f)}(K) = 0$, and Lemma~\ref{lem:pullback-n} gives $\gam[n]{A}{A}(K) = 0$.
\end{proof}

\end{document}